\theoremstyle{plain}
\newtheorem{lem}{Lemma}[section]
\newtheorem{thm}[lem]{Theorem}
\newtheorem*{ThmA}{Theorem A}
\newtheorem*{ThmB}{Theorem B}
\newtheorem*{ThmC}{Theorem C}
\newtheorem*{PropD}{Proposition D}
\newtheorem{cor}[lem]{Corollary}
\theoremstyle{definition}
\newtheorem{defn}[lem]{Definition}
\newtheorem*{defn*}{Definition}
\newtheorem{ex}[lem]{Example}
\newtheorem*{ex*}{Example}
\newtheorem{rem}[lem]{Remark}
\newtheorem*{rem*}{Remark}
\theoremstyle{remark}
\DeclareMathOperator{\dist}{dist}
\DeclareMathOperator{\area}{area}
\DeclareMathOperator{\capacity}{cap}
\DeclareMathOperator{\arctanh}{arctanh}
\newcommand{\C}{\mathbb C}
\newcommand{\D}{\mathbb D}
\newcommand{\clC}{\widehat{\C}}
\newcommand{\chat}{\widehat{\C}}
\newcommand{\R}{\mathbb R}
\newcommand{\Z}{\mathbb Z}
\newcommand{\N}{\mathbb N}
\newcommand{\bd}{\partial}
\renewcommand{\Re}{\textup{Re}}
\renewcommand{\Im}{\textup{Im}}
\renewcommand{\H}{\mathbb H}
\newcommand{\HH}{\mathbb H}
\newcommand{\calj}{\mathcal{J}}
\newcommand{\bdd}{\partial \D}
\newcommand{\A}{\mathcal A}
\begin{document}

\title{Escaping points in the boundaries of Baker domains}
\date{\today}

\author{Krzysztof Bara\'nski}
\address{Institute of Mathematics, University of Warsaw,
ul.~Banacha~2, 02-097 Warszawa, Poland}
\email{baranski@mimuw.edu.pl}

\author{N\'uria Fagella}
\address{Departament de Matem\`atica Aplicada i An\`alisi, Institut de Matem\`atiques de la 
Universitat de Barcelona (IMUB) and Barcelona Graduate School of Mathematics (BGSMath). 
 Gran Via 585, 08007 Barcelona, Catalonia, Spain}
\email{nfagella@ub.edu}

\author{Xavier Jarque}
\address{Departament de Matem\`atica Aplicada i An\`alisi, Institut de Matem\`atiques de la 
Universitat de Barcelona (IMUB), and Barcelona Graduate School of Mathematics (BGSMath).
 Gran Via 585, 08007 Barcelona, Catalonia, Spain}
\email{xavier.jarque@ub.edu}

\author{Bogus{\l}awa Karpi\'nska}
\address{Faculty of Mathematics and Information Science, Warsaw
University of Technology, ul.~Ko\-szy\-ko\-wa~75, 00-662 Warszawa, Poland}
\email{bkarpin@mini.pw.edu.pl}

\thanks{Research supported by the Polish NCN grant decision DEC-2012/06/M/ST1/00168, carried out at the University of Warsaw. The 
second and third authors were partially supported by the Spanish 
grants MTM2011-26995-C02-02 and MTM2014-52209-C2-2-P}
\subjclass[2010]{Primary 30D05, 37F10, 30D30.}

\bibliographystyle{amsalpha}

\begin{abstract} 
We study the dynamical behaviour of points in the boundaries of simply connected invariant Baker domains $U$ of meromorphic maps $f$ with a finite degree on $U$. We prove that if $f|_U$ is of hyperbolic or simply parabolic type, then almost every point in the boundary of $U$ with respect to harmonic measure escapes to infinity under iteration. On the contrary, if $f|_U$ is of doubly parabolic type, then almost every point in the boundary of $U$ with respect to harmonic measure has dense forward trajectory in the boundary of $U$, in particular the set of escaping points in the boundary of $U$ has harmonic measure zero. We also present some extensions of the results to the case when $f$ has infinite degree on $U$, including classical Fatou example. 
\end{abstract}

\maketitle

\section{Introduction and statement of the results} \label{sec:intro}

Let $f : \C \to \clC$ be a meromorphic map of degree larger than $1$ and consider the dynamical system generated by the iterates $f^n = f\circ \cdots \circ f$. The complex sphere is then divided into two invariant sets: the {\em Fatou set} $\mathcal F(f)$, which is the set of points $z\in\chat$, where the family of iterates $\{f^n\}_{n\geq 0}$ is defined and normal in some neighbourhood of $z$, and its complement, the {\em Julia set} $\mathcal J(f) = \chat \setminus \mathcal F(f)$, where chaotic dynamics occurs. We refer to \cite{bergweiler,carlesongamelin,milnor} for the basic properties of Fatou and Julia sets.

It is well known that for any polynomial of degree larger than one, the point at infinity is a super-attracting fixed point and the set of points whose orbits tend to infinity coincides with its immediate basin of attraction. Note that no point in the boundary of this basin tends to infinity under iteration. In the case of a transcendental map $f$, where infinity is no longer an attracting fixed point but an essential singularity, the \emph{escaping set} of $f$, defined as
\[
\mathcal I(f) = \{z \in \C: f^n(z) \text{ is defined for every } n \ge 0 \text{ and } f^n(z) \to \infty \text{ as } n \to \infty\},
\]
often exhibits much richer topology. In many cases, for instance, it contains a {\em Cantor bouquet} consisting of a Cantor set of unbounded curves (see for example \cite{aarts,bjl,dt}). For transcendental entire maps $f$ it is known that $\partial \mathcal I(f) =\mathcal J(f)$ \cite{ere-escaping}. 

Similarly as for polynomials, transcendental maps may also have components of the Fatou set (known as {\em Fatou components}) which are contained in $\mathcal I(f)$.  These may be {\em escaping wandering domains}, i.e.~non-preperiodic components for which the sequence $\{f^n\}_n$ tends  locally uniformly to infinity; or {\em Baker domains}, that is periodic Fatou components with the same property, either for $\{f^n\}_n$ (invariant case) or for $\{f^{kn}\}_n$, with some $k>1$. Baker domains are sometimes called ``parabolic domains at infinity'', although their properties do not always resemble those of parabolic basins (see Remark \ref{rem:parabolic}). We refer to  \cite{barfag,bfjk,berdrasin,bergzheng,faghen,henriksen,rippon,ripponstallard} for general background and results on Baker domains.

Considering examples of escaping wandering and Baker domains of different nature (see e.g.~\cite{rippon,barfag,faghen,faghen09}), it is natural to ask whether there are points in their boundaries which escape to infinity under iteration and if so, how large is the set of such points. This question was addressed by Rippon and Stallard in \cite{ripponstallard-escaping-wandering}, where they showed that almost all points (in the sense of harmonic measure) in the boundary of an escaping wandering component are in the escaping set. Their proof is also valid for some class of Baker domains (see Remark~\ref{rem:baranski}). Recently, in an inspiring paper \cite{ripponstallard-escaping}, they extended the result to the case of arbitrary univalent Baker domains (i.e.~ invariant simply connected Baker domains where $f$ is univalent) of entire maps.

Our goal in this paper is to extend the analysis to the case of finite degree invariant simply connected Baker domains $U$ for meromorphic maps $f$ (with some extensions to infinite degree), showing that there is a dichotomy in the dynamical behaviour of the boundary points of $U$, depending on the {\em type} of the domain in the sense of the Baker--Pommerenke--Cowen classification. We say that $f|_U$ is of {\em hyperbolic} (resp.~{\em simply parabolic} or {\em doubly parabolic}) type if the dynamics in $U$ is eventually conjugate to $\omega\mapsto a \omega$ with $a>1$ on the right half-plane $\H$ (resp.~to $\omega \mapsto \omega\pm i$ on $\H$, or to $\omega\mapsto \omega+1$ on $\C$). See Theorem~\ref{thm:cowen} for details. Equivalently, doubly parabolic Baker domains are those for which the hyperbolic distance $\varrho_U$ in $U$ between $f^n(z)$ and $f^{n+1}(z)$ tends to $0$ as $n \to \infty$ for $z \in U$ (see Theorem~\ref{thm:doubly}).

In this paper we show that the Rippon and Stallard result from \cite{ripponstallard-escaping} for univalent Baker domains remains valid in the case of finite degree invariant simply connected Baker domains $U$ as long as $f|_U$ is of hyperbolic or simply parabolic type, while in the remaining doubly parabolic case the iterations of the boundary points behave totally differently -- a typical point in the sense of harmonic measure has dense trajectory in the boundary of $U$, in particular it is not escaping to infinity. The precise statements are presented below as Theorems~A and~B. In the proofs, apart from methods used in \cite{ripponstallard-escaping}, we rely on the results by Aaronson \cite{aaronson2,aaronson3} and Doering--Ma\~n\'e \cite{doering-mane} on the ergodic theory of inner functions. 

If $U$ is an invariant simply connected Fatou component of a meromorphic map $f$, then the map
\[
g: \D \to \D, \qquad g = \varphi^{-1}\circ f \circ \varphi,
\]
where $\varphi: \D \to U$ is a Riemann map, is an inner function (see Definition \ref{def:inner}) with degree equal to the degree of $f|_U$. We call $g$ an inner function \emph{associated} to $f|_U$. If the degree of $g$ is finite, then it is a finite Blaschke product and extends to a rational map of the Riemann sphere. If the degree is infinite, $g$ has at least one \emph{singular point} in the unit circle $\partial \D$, i.e.~a point $\zeta$ with no holomorphic extension of $g$ to any neighbourhood of $\zeta$. 

If $U$ is an invariant simply connected Baker domain of $f$, the associated inner function $g$ has no fixed points in $\D$, and the Denjoy--Wolff Theorem (see Theorem \ref{thm:denjoy}) implies the existence of the {\em Denjoy--Wolff point} $p$ in the unit circle, such that every orbit of a point in $\D$ under iteration of $g$ converges to $p$. See Section~\ref{sec:prelim} for details.

By the Fatou Theorem, the Riemann map $\varphi$ extends almost everywhere to the unit circle in the sense of radial limits. We consider the harmonic measure on $\bd U$ defined to be the image under $\varphi$ of the normalized Lebesgue measure on the unit circle. 

In this paper we prove the following. 

\begin{ThmA}
Let $f: \C \to \clC$ be a meromorphic map and let $U$ be a simply connected invariant Baker domain of $f$, such that the degree of $f$ on $U$ is finite and $f|_U$ is of hyperbolic or simply parabolic type $($i.e. $\varrho_U (f^{n+1}(z), f^n(z)) \not\to 0$ as $n \to \infty$ for $z \in U)$. Then the set $\mathcal I(f) \cap \bdd$ of escaping points in the boundary of $U$ has full harmonic measure.

More generally, the statement remains true if instead of a finite degree of $f$ on $U$ we assume that the associated inner function $g = \varphi^{-1}\circ f \circ \varphi$, where $\varphi: \D \to U$ is a Riemann map, has non-singular Denjoy--Wolff point.
\end{ThmA}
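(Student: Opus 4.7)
My plan is to transfer the problem to the unit disc via the Riemann map $\varphi\colon\D\to U$ and the associated inner function $g=\varphi^{-1}\circ f\circ\varphi$. Since $U$ is a Baker domain, $g$ has no fixed point in $\D$, and the Denjoy--Wolff theorem supplies a boundary point $p\in\partial\D$ attracting every interior orbit, with $f^n(\varphi(z))=\varphi(g^n(z))\to\infty$ for every $z\in\D$. Under the hypothesis of the theorem the Denjoy--Wolff point $p$ is non-singular (automatic in the finite-degree setting, assumed outright in the more general statement), so $g$ extends holomorphically across $p$ with $g'(p)\in(0,1]$; the assumption that $f|_U$ is of hyperbolic or simply parabolic type then places us either in the case $g'(p)<1$ or in the case $g'(p)=1$ with tangential approach of interior orbits to $p$.

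The central tool will be the ergodic theory of inner functions developed by Aaronson and Doering--Ma\~n\'e. Their results yield the following dichotomy at a non-singular Denjoy--Wolff point: if $f|_U$ is of hyperbolic or simply parabolic type, the boundary extension $g^*\colon\partial\D\to\partial\D$ is dissipative with respect to Lebesgue measure, and Lebesgue-almost every $\zeta\in\partial\D$ satisfies $g^{*n}(\zeta)\to p$; only in the doubly parabolic case (the subject of Theorem~B) does $g^*$ become conservative and ergodic. Invoking this together with Fatou's theorem, which furnishes radial limits $\varphi^*(\zeta)\in\partial U$ for almost every $\zeta$, and the extension of the semiconjugacy to boundary values, we obtain $f^n(\varphi^*(\zeta))=\varphi^*(g^{*n}(\zeta))$ for a.e.\ $\zeta$ and every $n\geq 0$, together with $g^{*n}(\zeta)\to p$ a.e.

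What remains, and is the main technical obstacle, is to deduce from $g^{*n}(\zeta)\to p$ on $\partial\D$ that $\varphi^*(g^{*n}(\zeta))\to\infty$ in $\C$. Because $\varphi^*$ is defined only almost everywhere and may oscillate wildly near $p$, no pointwise continuity argument applies. The proposed remedy is to combine the local linearization of $g$ at $p$—the Koenigs coordinate in the hyperbolic case, its parabolic analogue in the simply parabolic case—with Cowen's absorbing-domain theorem, which produces a sub-domain $V\subset U$ escaping to infinity under $f$, whose preimage under $\varphi$ contains a Stolz-angle-like neighbourhood of $p$ in $\D$. Together these should show that for every $R>0$ there exists an arc $A_R\subset\partial\D$ around $p$ with $|\varphi^*(\zeta)|>R$ for Lebesgue-a.e.\ $\zeta\in A_R$. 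Since a.e.\ boundary orbit enters $A_R$ eventually for each $R$, one concludes $\varphi^*(g^{*n}(\zeta))\to\infty$ a.e. Pushing forward this full-measure subset of $\partial\D$ by $\varphi^*$ then yields that $\mathcal I(f)\cap\partial U$ has full harmonic measure, as desired.
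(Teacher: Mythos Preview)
Your overall architecture is right, and you correctly isolate the crux of the matter: once you know $g^{*n}(\zeta)\to p$ Lebesgue-a.e.\ on $\partial\D$ (which does follow from Aaronson's dichotomy, since in the hyperbolic and simply parabolic cases the local form of $g$ at $p$ forces $\sum_n(1-|g^n(z)|)<\infty$), the real work is to upgrade this to $\varphi^*(g^{*n}(\zeta))\to\infty$. But the mechanism you propose for that step does not deliver.

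Your intermediate claim is that for every $R>0$ there is an arc $A_R\ni p$ on which $|\varphi^*|>R$ Lebesgue-a.e. You propose to obtain this from Cowen's absorbing domain together with the linearization at $p$, on the grounds that $\varphi$ sends a Stolz-type region at $p$ into a part of $U$ that escapes to infinity. That inference is not valid: the absorbing domain lies in the \emph{interior} of $\D$, and knowing that $\varphi$ is large on a Stolz angle at $p$ says nothing about the radial limits of $\varphi$ at boundary points $\zeta\neq p$ near $p$. The radial segment at such a $\zeta$ leaves every Stolz angle at $p$ before it meets $\partial\D$, and $\varphi$ can oscillate arbitrarily between the Stolz angle and the boundary. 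In fact, the $A_R$ claim is strictly stronger than what is needed and there is no reason to expect it to hold: bounded accessible points of $\partial U$ can correspond, under $\varphi^*$, to positive-measure subsets of $\partial\D$ accumulating at $p$.

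What the paper does instead is a quantitative, Borel--Cantelli-style argument. Using the Koenigs (hyperbolic) or Fatou (simply parabolic) coordinate of $g$ at $p$, one cuts a one-sided arc at $p$ into fundamental arcs $J_n$ with $g(J_n)=J_{n+1}$, and sets $B_n(M)=\{z\in J_n: |\varphi^*(z)|<M\}$. The heart of the proof is a Pfl\"uger-type extremal-length estimate for conformal maps (Theorem~9.24 in Pommerenke's book): applied to $\Phi_n=1/(\varphi\circ\psi_n)$, where $\psi_n$ uniformizes a small simply connected piece $S_n\subset\D$ abutting $J_n$, it bounds $\lambda(\psi_n^{-1}(B_n))$ by $c_1\exp(-c_2/\operatorname{area}V_n)$ with $V_n=\{1/z:z\in\varphi(S_n),\,M<|z|<2M\}$. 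Since the $S_n$ have bounded overlap, the areas $\operatorname{area}V_n$ are summable, and after undoing the linearization one gets $\sum_n \lambda(B_n)/a^n<\infty$ with $a=g'(p)$. Pulling back by $g^{-k}$ (whose derivative on $J_n$ is comparable to $a^{-k}$) then gives $\sum_k\lambda\big(g^{-k}(B_{n+k})\big)<\infty$, so by Borel--Cantelli, for a.e.\ $z$ in the arc $I=\bigcup J_n$ one has $|\varphi^*(g^k(z))|\geq M$ for all large $k$, for every $M$. Only after this is Aaronson's dichotomy invoked, to spread the conclusion from $I$ to all of $\partial\D$.

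So the missing ingredient in your plan is precisely this Pfl\"uger/extremal-length estimate; the Cowen absorbing domain and Stolz-angle heuristics cannot substitute for it.
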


\begin{rem} \label{rem:baranski}
If $U$ is a simply connected invariant Baker domain of $f$, such that the degree of $f$ on $U$ is finite and
\begin{equation}\label{eq:>0}
\limsup_{n\to \infty} \frac{|f^{n+1}(z)- f^n(z)|}{\dist(f^n(z), \bd U)} >
0
\end{equation}
for some $z \in U$ (in particular, if 
\[
\limsup_{n\to \infty}  \left|\frac{f^{n+1}(z)}{f^n(z)} - 1\right| > 0
\]
for some $z \in U$), then $U$ is of hyperbolic or simply parabolic type (see  Theorem~\ref{thm:doubly}). Therefore, Theorem~A implies in this case that  the set of escaping points in the boundary of $U$ has full harmonic measure. In \cite{ripponstallard-escaping-wandering}, Rippon and Stallard proved that  the statement remains true without the finite degree assumption, if \eqref{eq:>0} is replaced by a stronger condition, saying that there exist $z \in U$ and $K > 1$ such that
\begin{equation}\label{eq:K}
|f^{n+1}(z)| > K |f^n(z)|
\end{equation}
for every $n > 0$. In fact, their proof gives that the condition
\eqref{eq:K} can be replaced by
\[
\sum_{n = 0}^\infty \frac{1}{\sqrt{|f^n(z)|}} < \infty.
\]
\end{rem}

Our next theorem shows that an opposite situation arises when the Baker domain is of doubly parabolic type.

\begin{ThmB}
Let $f: \C \to \clC$ be a meromorphic map and let $U$ be a simply connected invariant Baker domain of $f$, such that the degree of $f$ on $U$ is finite and $f|_U$ is of doubly parabolic type $($i.e. $\varrho_U (f^{n+1}(z), f^n(z)) \to 0$ as $n \to \infty$ for $z \in U)$. Then the set of points $z$ in the boundary of $U$, whose forward trajectories $\{f^n(z)\}_{n \ge 0}$ are dense in the boundary of $U$, has full harmonic measure.

More generally, the statement remains true if instead of a finite degree of $f$ on $U$ we assume that the associated inner function $g = \varphi^{-1}\circ f \circ \varphi$, where $\varphi: \D \to U$ is a Riemann map, has non-singular Denjoy--Wolff point.
\end{ThmB}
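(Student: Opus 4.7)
The plan is to push the dynamics from $\partial U$ to the unit circle via a Riemann uniformization and then invoke the ergodic theory of inner functions. Let $\varphi:\D\to U$ be a Riemann map and $g=\varphi^{-1}\circ f\circ\varphi$ the associated inner function, whose Denjoy--Wolff point $p\in\partial\D$ is non-singular by hypothesis. By the Fatou theorem the radial limit $\varphi^{*}(\zeta)$ exists for Lebesgue-a.e.\ $\zeta\in\partial\D$, lies in $\partial U$, and pushes normalized Lebesgue measure forward to the harmonic measure on $\partial U$. A standard argument on boundary behaviour of $f$-invariant Riemann maps gives the semiconjugacy $f\circ\varphi^{*}=\varphi^{*}\circ g$ a.e.\ on $\partial\D$, so the forward $f$-orbit of $\varphi^{*}(\zeta)$ equals $\{\varphi^{*}(g^n(\zeta))\}_{n\ge 0}$. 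It therefore suffices to prove that for Lebesgue-a.e.\ $\zeta\in\partial\D$ this orbit is dense in $\partial U$.

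The core is a one-dimensional ergodic input. In the doubly parabolic case, the hypothesis $\varrho_U(f^{n+1}(z),f^n(z))\to 0$ translates, via the isometry of the hyperbolic metrics under $\varphi$, to $\varrho_\D(g^{n+1}(w),g^n(w))\to 0$ for $w\in\D$. By theorems of Aaronson and Doering--Ma\~n\'e on inner functions with non-singular Denjoy--Wolff point, this condition forces the boundary map $g:\partial\D\to\partial\D$ to be ergodic and conservative with respect to Lebesgue measure; the invariant measure absolutely continuous with respect to Lebesgue is $\sigma$-finite and infinite, but only conservativity and ergodicity are needed. As a consequence, for any Borel set $A\subset\partial\D$ of positive Lebesgue measure, Lebesgue-a.e.\ $\zeta\in\partial\D$ satisfies $g^n(\zeta)\in A$ for infinitely many $n$.

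To upgrade this to density in $\partial U$, I would fix a countable basis $\{W_k\}_{k\ge 1}$ for the topology of $\partial U$, obtained by intersecting $\partial U$ with a countable basis of open subsets of $\chat$. Each nonempty relatively open $W_k\subset\partial U$ has positive harmonic measure, since the harmonic measure of a simply connected domain has full support on its boundary, so $A_k=(\varphi^{*})^{-1}(W_k)$ has positive Lebesgue measure in $\partial\D$. Applying the recurrence statement of the previous paragraph to each $A_k$ and intersecting the countably many resulting full-measure sets produces a full-measure set $E\subset\partial\D$ such that every $\zeta\in E$ visits every $A_k$ infinitely often, equivalently $f^n(\varphi^{*}(\zeta))$ meets every $W_k$. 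The image $\varphi^{*}(E)\subset\partial U$ then has full harmonic measure and every point of it has dense forward $f$-orbit in $\partial U$.

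The main obstacle will be the correct invocation of the Aaronson / Doering--Ma\~n\'e machinery: first identifying the doubly parabolic hyperbolic-step condition with recurrence (conservativity) of the boundary inner function, for which the non-singularity of the Denjoy--Wolff point is the crucial regularity hypothesis, and then extracting ergodicity with respect to Lebesgue measure even though the only absolutely continuous invariant measure is $\sigma$-finite and infinite. A secondary technical point is the verification of the boundary semiconjugacy $f\circ\varphi^{*}=\varphi^{*}\circ g$ at a.e.\ point of $\partial\D$ and of the positivity of harmonic measure on every nonempty relatively open subset of $\partial U$; both are standard for simply connected domains but require some care since $\partial U$ may be topologically pathological and $\varphi^{*}$ may fail to be injective.
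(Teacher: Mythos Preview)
Your overall route is the same as the paper's: transfer to $\partial\D$ via $\varphi$, establish that $g$ is conservative and ergodic on $\partial\D$ with respect to Lebesgue measure, then use a countable basis for $\partial U$ together with positivity of harmonic measure on open sets to deduce dense orbits. The ergodicity step is indeed immediate from the doubly parabolic hypothesis alone (Aaronson's exactness theorem, Theorem~\ref{thm:exact}), and the passage from conservativity plus ergodicity to dense orbits is exactly as you describe.

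The gap is precisely where you flag it: conservativity does \emph{not} follow from the bare condition $\varrho_\D(g^{n+1}(w),g^n(w))\to 0$, and there is no black-box theorem of Aaronson or Doering--Ma\~n\'e that delivers it directly from ``doubly parabolic plus non-singular Denjoy--Wolff point''. Example~\ref{ex:mane} in the paper exhibits a doubly parabolic inner function whose boundary map is not conservative, so some additional work is genuinely required. The paper's mechanism is this: non-singularity of $p$ means $p$ is a parabolic fixed point of multiplicity $3$, and a local power-series computation (Theorem~\ref{thm:traj}(c), due to Bergweiler) yields the quantitative rate
\[
\varrho_\D(g^{n+1}(w),g^n(w)) = \frac{1}{2n} + O\!\left(\frac{\ln n}{n^2}\right).
\]
This places $f$ under the hypothesis of Theorem~C, whose proof converts a $1/n$-type decay of the hyperbolic step into divergence of $\sum_n(1-|g^n(w)|)$ via the Gauss series convergence test; only then does Aaronson's dichotomy (Theorem~\ref{thm:ifthen}) produce conservativity. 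So the role of the non-singularity hypothesis is not as input to an ergodic theorem but as the source of a precise local asymptotic; filling this in is the substance of the argument you still need to supply.
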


The following example illustrates Theorem~B.

\begin{ex}[see \cite{baker-dominguez,faghen}]\label{ex:BD} Consider the map $f(z) = z + e^{-z}$, which is Newton's method applied to the entire function $F(z) = e^{-e^z}$. By \cite{bfjk}, Baker domains for Newton maps are simply connected. In fact, $f$ has infinitely many simply connected invariant Baker domains $U_k$, $k \in \Z$, such that $U_k = U_0 + 2k\pi i$, and $\deg N|_{U_k} = 2$. Since $f|_{U_k}$ is of doubly parabolic type (which can be easily checked using Theorem~\ref{thm:doubly}), it satisfies the condition of Theorem~B and hence the set of escaping points in the boundary of $U_k$ has zero harmonic measure. It seems plausible that all escaping points in $\bd U_k$ are non-accessible from $U_k$, while accessible repelling periodic points are dense in $\bd U_k$.
\end{ex}

Note that the assertion of Theorem~B does not hold for all $f$ of doubly parabolic type with infinite degree on $U$, as shown in the following example. 

\begin{ex}[{see \cite{aaronson3}, \cite[page 18]{doering-mane}}] \label{ex:mane}
Let $f:\C \to \clC$,
\[
f(z) = z - \sum_{n=0}^\infty \frac{2z}{z^2 - n^\delta}, \qquad 1 < \delta < 2.
\]

It is obvious that the upper half-plane $U = \{z: \Im(z) > 0\}$ is invariant under $f$. In \cite{aaronson3} it was shown that $\varrho_U(f^{n+1}(z), f^n(z)) \to 0$ as $n \to \infty$ for $z \in U$ and $f^n(x) \to \infty$ as $n \to \infty$ for almost every $x \in \R$ with respect to the Lebesgue measure. Hence, $U$ is a simply connected invariant Baker domain of doubly parabolic type and, since the harmonic and Lebesgue measure on $\bd U  = \R$ are mutually absolutely continuous (as the Riemann map $\varphi: \D \to U$ is M\"obius), the set of non-escaping points in $\bd U$ has zero harmonic measure. 
\end{ex}

In fact, there is a wider class of infinite degree Baker domains of doubly parabolic type, for which the assertion of Theorem~B still holds. Roughly speaking, the statement remains true when the hyperbolic distance between successive iterates of points in $U$ under $f$ tends to zero ``fast enough''. More precisely, we prove the following.

\begin{ThmC}
Let $f: \C \to \clC$ be a meromorphic map and let $U$ be a simply connected invariant Baker domain such that
\[
\varrho_U (f^{n+1}(z), f^n(z)) \leq \frac 1 n + O\left(\frac 1 {n^r} \right)
\]
as $n \to \infty$ for some $z \in U$ and $r > 1$. 
Then the set of points with forward trajectories dense in the boundary of $U$ $($in particular, non-escaping points$)$ has full harmonic measure.
\end{ThmC}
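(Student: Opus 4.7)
The plan is to transfer the problem to the unit disk via the Riemann map $\varphi:\D \to U$ and the associated inner function $g = \varphi^{-1}\circ f\circ\varphi$, and then apply the ergodic theory of inner functions due to Aaronson and Doering--Ma\~n\'e. Since $\varphi$ is a hyperbolic isometry between $(U,\varrho_U)$ and $(\D,\varrho_\D)$, setting $w_0 = \varphi^{-1}(z)$ the hypothesis transfers verbatim to
\[
\varrho_\D\bigl(g^{n+1}(w_0),\, g^n(w_0)\bigr) \leq \frac{1}{n} + O\!\left(\frac{1}{n^r}\right).
\]
In particular, these hyperbolic increments tend to $0$, so by the characterization used in Theorem~\ref{thm:doubly}, $g$ has Denjoy--Wolff point $p\in\partial\D$ and is of doubly parabolic type, with Cowen conjugation to $\omega\mapsto\omega+1$ on a half-plane absorbing set.

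The goal reduces to showing that the boundary map $g:\partial\D\to\partial\D$ (defined a.e.\ via radial limits) is \emph{ergodic and conservative} with respect to normalized Lebesgue measure. Once this is established, a standard application of Birkhoff's theorem to the indicator functions of a countable basis of open arcs in $\partial\D$ yields that Lebesgue-a.e.\ point of $\partial\D$ has dense forward $g$-orbit in $\partial\D$. Since $\varphi$ pushes Lebesgue measure on $\partial\D$ forward to harmonic measure on $\partial U$, and since the radial extension of $\varphi$ intertwines the boundary dynamics of $g$ and of $f$ almost everywhere, this transfers to the desired density statement for $f$ on $\partial U$, which in turn implies non-escape.

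The technical heart is to verify the conservativity/ergodicity criterion underlying the Aaronson--Doering--Ma\~n\'e framework in the possibly infinite-degree setting. In the finite-degree or non-singular Denjoy--Wolff-point case of Theorem~B this is immediate, but here one must work with the full inner function. The strategy is to use Cowen's semi-conjugation to replace $g^n$ by the translation $\omega\mapsto\omega+n$ on an absorbing half-plane, and then to show that the bound $1/n + O(1/n^r)$ is exactly the input needed to control the Euclidean step sizes in the conjugated picture and render the distortion estimates entering Aaronson's argument summable. The leading $1/n$ matches the natural asymptotic rate of a doubly parabolic inner function, while $r>1$ gives a summable error $\sum 1/n^r <\infty$ that rules out the escape behaviour exhibited by Example~\ref{ex:mane}.

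The main obstacle is this quantitative comparison between the hyperbolic metric on $\D$, the Euclidean coordinate supplied by Cowen's conjugation on the absorbing set, and the distortion of Lebesgue measure on $\partial\D$ under pullback by $g^n$. The summability produced by $r>1$ is what makes this comparison tight enough to force conservativity, and hence ergodicity; the borderline nature of the hypothesis (the $1/n$ part cannot be sharpened without obstructing doubly parabolicity, and $r>1$ cannot be relaxed without admitting Example~\ref{ex:mane}) indicates that the assumption of Theorem~C is essentially optimal within this line of attack.
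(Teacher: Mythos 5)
Your overall framework is the right one and coincides with the paper's at the level of strategy: pass to the associated inner function $g$ via the Riemann map, observe that the hypothesis forces doubly parabolic type, establish that the boundary map $g$ on $\bdd$ is conservative, ergodic and non-singular for Lebesgue measure, and then conclude density of almost every orbit and push forward by $\varphi$ to harmonic measure. (One small correction at that last step: Birkhoff's theorem is not the right tool, since the only absolutely continuous invariant measure here, $\mu_p$ from \eqref{eq:mu}, is infinite; but density follows directly from conservativity, ergodicity and non-singularity via Theorem~\ref{thm:conserv}, so this is easily repaired. Non-singularity itself needs a word: it comes from the invariance of $\mu_p$ under $g$ (Theorem~\ref{thm:mu}) together with \eqref{eq:zero}. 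Ergodicity also costs nothing extra in the infinite-degree setting, because exactness of doubly parabolic inner functions (Theorem~\ref{thm:exact}) holds for arbitrary inner functions with Denjoy--Wolff point on $\bdd$.)

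The genuine gap is the conservativity step, which you correctly identify as the technical heart but do not actually prove. The criterion to be verified is Aaronson's dichotomy (Theorem~\ref{thm:ifthen}): $g$ on $\bdd$ is conservative provided $\sum_{n}\bigl(1-|g^n(w)|\bigr)=\infty$ for some $w\in\D$, and your sketch never establishes this divergence. The route you propose --- replacing $g^n$ by the translation via Cowen's semiconjugacy and making ``distortion estimates'' on $\bdd$ summable --- is not a proof and is unlikely to become one: Cowen's map $\psi$ is only guaranteed univalent on an absorbing set inside $\D$, it carries no quantitative information about $1-|g^n(w)|$ or about Lebesgue measure on the circle, and in the infinite-degree doubly parabolic case there is no boundary distortion control to exploit. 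What actually closes the gap is a purely interior computation: writing $a_n=1-|g^n(w)|$, the explicit formula \eqref{eq:hyp} for $\varrho_\D$ together with the hypothesis $\varrho_\D(g^{n+1}(w),g^n(w))\le \tfrac1n+\tfrac{c}{n^r}$ yields (after an elementary manipulation, the case $a_n\le a_{n+1}$ being trivial) the ratio bound $a_n/a_{n+1}\le 1+\tfrac1n+\tfrac{2c}{n^r}$ for large $n$, and then Gauss's series convergence test --- this is precisely where $r>1$ enters --- forces $\sum_n a_n=\infty$. Without identifying this criterion and carrying out that estimate, the argument does not go through; with it, the rest of your outline matches the paper's proof.
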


\begin{rem} By the Schwarz--Pick Lemma, the sequence $\varrho_U (f^{n+1}(z), f^n(z))$ is non-increasing. The condition in Theorem~C implies  $\varrho_U (f^{n+1}(z), f^n(z)) \to 0$, so $f|_U$ is of doubly parabolic type. 
For the Baker domain $U$ of the map $f$ described in Example~\ref{ex:mane} we have $c_1/n \leq \varrho_U (f^{n+1}(z), f^n(z)) \leq c_2/n$ for $z \in U$ and some $c_1, c_2 > 0$ (see \cite{doering-mane}), which shows that the estimate by $1/n + O(1/n^r)$ in Theorem~C cannot be changed to $c/n$ for arbitrary $c > 0$.

Note also that for an arbitrary Baker domain we have 
\[
\sum_{n= 0}^\infty \varrho_U (f^{n+1}(z), f^n(z)) = \infty
\]
for $z \in U$, since $f^n(z)$ converges to a boundary point of $U$ as $n \to \infty$. Therefore, the sequence $\varrho_U (f^{n+1}(z), f^n(z))$ cannot decrease to $0$ arbitrarily fast. 
\end{rem}

It is natural to ask whether there are actual examples of Baker domains satisfying the assumptions of Theorem~C. The following proposition answers this question in affirmative for a whole family of maps.

\begin{PropD} Let $f:\C \to \clC$ be a meromorphic map of
the form
\[
f(z) = z + a + h(z),
\]
where $a \in \C\setminus\{0\}$ and $h:\C \to \clC$ is a meromorphic map satisfying  
\[
|h(z)| < \frac{c_0}{(\Re(z/a))^r} \quad \text{for} \quad  \Re\left(\frac z
a\right) > c_1,
\]
for some constants $c_0, c_1 > 0$, $r > 1$. Then $f$ has an invariant Baker domain $U$ containing a half-plane $\{z
\in \C: \Re(z/a) > c\}$ for some $c \in\R$. Moreover, if $U$ is simply
connected $($e.g.~if $f$ is entire$)$, then $f$ on $U$ satisfies the
assumptions of Theorem~{\rm C} and, consequently, the set of points with dense  forward trajectories in the boundary of $U$ $($in particular, non-escaping points$)$ has full harmonic measure.
\end{PropD}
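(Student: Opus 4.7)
The plan is to first reduce to the normalized case $a=1$ via the affine conjugacy $z\mapsto z/a$, which converts $f$ to $\widetilde f(z)=z+1+\widetilde h(z)$ with $\widetilde h$ satisfying a bound of the same form $|\widetilde h(z)|<c_0'/(\Re z)^r$ on $\{\Re z>c_1\}$. It therefore suffices to work under the assumption $a=1$. The argument then splits into an existence part (producing the Baker domain) and an estimate part (verifying the hypothesis of Theorem~C).

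For the existence part, I would show that for $c$ sufficiently large (larger than $c_1$ and than a constant depending on $c_0,r$), the right half-plane $H_c=\{z:\Re z>c\}$ is mapped into itself by $f$, via
\[
\Re f(z)\ \geq\ \Re z + 1 - \frac{c_0}{(\Re z)^r}\ \geq\ \Re z + \tfrac12.
\]
Iterating, $\Re f^n(z)\geq \Re z + n/2$, so $\{f^n\}$ converges locally uniformly to infinity on $H_c$, placing $H_c$ inside an invariant Fatou component on which iterates escape to infinity, which is by definition a Baker domain $U$. A sharper induction, using summability of $\sum_k 1/k^r$ (where $r>1$ is crucial), upgrades the estimate to $\Re f^n(z)=n+O(1)$ as $n\to\infty$.

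For the estimate part, assume $U$ is simply connected. The Schwarz--Pick monotonicity applied to the inclusion $H_c\hookrightarrow U$ gives $\varrho_U\leq \varrho_{H_c}$, and the translation $w=z-c$ identifies $H_c$ isometrically with the standard right half-plane $\H$ carrying the metric $|dw|/\Re w$. Writing $z_n=f^n(z)$ and $w_n=z_n-c$, I have $w_{n+1}-w_n=1+h(z_n)$ with $|h(z_n)|=O(1/n^r)$ and $\Re w_n=n+O(1)$. Integrating $|dw|/\Re w$ along the straight segment from $w_n$ to $w_{n+1}$ yields
\[
\varrho_U(z_n,z_{n+1})\ \leq\ \varrho_\H(w_n,w_{n+1})\ \leq\ \frac{1+O(1/n^r)}{n+O(1)}\ =\ \frac{1}{n} + O\!\left(\frac{1}{n^2}\right),
\]
which is precisely the hypothesis of Theorem~C with exponent $2>1$. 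The conclusion about forward trajectories being dense in $\bd U$, and in particular about escaping boundary points forming a set of harmonic measure zero, then follows directly from Theorem~C.

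The main technical obstacle I anticipate is keeping the inductive estimate for $\Re z_n$ sharp within an additive $O(1)$ error along the whole orbit; this is precisely where summability of the tail $\sum_{k\geq N}1/k^r$, and hence the hypothesis $r>1$, enters decisively. Once that is in hand, the remaining half-plane metric computation is routine, and the exponent in the final bound $1/n+O(1/n^2)$ is governed by the $O(1)$ drift in $\Re z_n$ rather than by $r$ itself, so any $r>1$ in the hypothesis of Proposition~D suffices to land inside the range of Theorem~C.
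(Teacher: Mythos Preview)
Your proposal is correct and follows essentially the same route as the paper: reduce to $a=1$, show a right half-plane is forward invariant with $\Re f^n(z)=n+O(1)$ via an induction that consumes the summable tail $\sum c_0/k^r$, then use domain monotonicity $\varrho_U\le\varrho_{H_c}$ and a direct half-plane computation to obtain $\varrho_U(f^n(z),f^{n+1}(z))\le 1/n+O(1/n^2)$. The only cosmetic difference is that the paper estimates the half-plane distance by splitting $\varrho_H(f^{n+1}(z),f^n(z))\le\varrho_H(f^{n+1}(z),f^n(z)+1)+\varrho_H(f^n(z)+1,f^n(z))$ and treating the two pieces with \eqref{eq:hypdist} and the explicit logarithmic formula, whereas you integrate the density along the segment; both give the same $1/n+O(1/n^2)$ bound.
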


Using Proposition~D, one can find a number of examples
of Baker domains of doubly parabolic type and of infinite degree,
for which the set of points with dense forward trajectories in the boundary of $U$ has full harmonic measure. 
The first one is the classical example of a completely invariant Baker
domain, studied by Fatou in \cite{Fatou}.

\begin{ex}\label{ex:fatou}  Let $f:\C \to \clC$,
\[
f(z) = z + 1 + e^{-z}.
\]
Then $f$ obviously satisfies the conditions of Proposition~D
with $a = 1$.
\end{ex}

The next example was described in \cite{doering-mane} (see also
\cite{bfjkaccesses}).

\begin{ex}\label{ex:tan}
Let $f:\C \to \clC$,
\[
f(z) = z + \tan z.
\]
Then $U_+ = \{z: \Im(z) > 0\}$, $U_- = \{z: \Im(z) < 0\}$ are simply
connected invariant Baker domains, such that $\deg f|_{U_\pm} = \infty$.
Moreover, for large $\pm\Im(z)$,
\[
|f(z) - z \mp i| = |\tan z \mp i| \le \frac{2e^{\mp 2\Im(z)}}{1 - e^{\mp
2\Im(z)}} < 3 e^{\mp 2\Im(z)},
\]
so $f$ on $U_\pm$ satisfies the conditions of Proposition~D
with $a = \pm i$.
\end{ex}

The third example was described in \cite[Example 7.3]{bfjkaccesses}.

\begin{ex}\label{ex:absorb} Let $f:\C \to \clC$,
\[
f(z)=z+i + \tan z,
\]
which is Newton's method of the entire transcendental map $F(z)=\exp
\left(-\int_0^z \frac{du}{i+\tan u}\right)$. Then $f$ has a simply
connected invariant Baker domain $U$ containing an upper half-plane, such
that $\deg f|_U = \infty$. Since
\[
|f(z) - z -2i| = |\tan z - i|,
\]
the calculation in Example~\ref{ex:tan} shows that $f$ on $U$ satisfies
the conditions of Proposition~D with $a = 2i$. Note that $f$
has infinitely many simply connected invariant Baker domains $U_k$, $k \in
\Z$ of doubly parabolic type, such that $\deg f|_{U_k} = 2$, which satisfy the assumptions of Theorem~B.
\end{ex}

\begin{rem}[\bf The case of parabolic basins] \label{rem:parabolic} The results in Theorems~B and~C are valid  when instead of Baker domains we consider invariant parabolic basins, i.e. invariant Fatou components $U$ such that $f^n \to \zeta$ on $U$ as $n \to \infty$, with $\zeta \in \C$ being a boundary point of $U$ such that $f'(\zeta) = 1$. In this case instead of the escaping set $\mathcal{I}(f)$ one considers the set of points which converge to $\zeta$ under the iteration of $f$. 
Using extended Fatou coordinates, one can see that the dynamics in $U$ is semiconjugate to $z\mapsto z+1$ in $\C$, and hence every parabolic basin is of doubly parabolic type in the sense of Baker--Pommerenke--Cowen. In fact, in the case of parabolic basins of rational maps $f$, the results described in Theorem~B were proved in \cite{doering-mane} and \cite{adu}. 
\end{rem}

The paper is organized as follows. Section~2 contains some background and the statement of results we shall use in the proofs. Section~3 contains the proof of Theorem~A, Section~4 includes the proof of Theorem~C, while the proofs of Theorem~B and Proposition~D can be found in Section~5.

\ack{We wish to thank Phil Rippon and Gwyneth Stallard for inspiring discussions about their results on the boundary behaviour of maps on Baker domains. We thank Mariusz Urba\'nski for suggesting a strengthening of Theorem~B. We are grateful to the Institute of Mathematics of the Polish Academy of Sciences (IMPAN), Warsaw University, Warsaw University of Technology and Institut de Matem\`atiques de la Universitat de Barcelona (IMUB) for their hospitality while this paper was in progress.}

%%%%%%%%%%%%%%%%%%%%%%%%%%%%%%%%%%%%%
%%%%%%%%%%%%%%%%%%%%%%%%%%%%%%%%%%%%%
\section{Preliminaries} \label{sec:prelim}

\subsection*{Notation}

We denote, respectively, the closure and boundary of a set $A$ in $\C$ by $\overline{A}$ and $\bd A$. The Euclidean disc of radius $r$ centred at $z \in\C$ is denoted by $\D(z,r)$ and the unit disc $\D(0,1)$ is written as $\D$.
For a point $z \in \C$ and a set $A \subset \C$ we write
\[
\dist(z, A) = \inf_{w \in A} |z - w|.
\]
We denote by $\lambda$ the Lebesgue measure on the unit circle $\bdd$.  

We consider hyperbolic metric $\varrho_U$ on hyperbolic domains $U \subset \C$. In particular, we have
\begin{equation}\label{eq:hyp}
d\varrho_\D(z) = \frac{2 |dz|}{1 - |z|^2} \quad {\rm and} \quad \varrho_\D(z_1, z_2) = 2 \arctanh \left|\frac{z_1 - z_2}{1 - z_1 \overline z_2}\right|
\end{equation}
for $z,z_1,z_2 \in \D$. We use the standard estimate
\begin{equation}\label{eq:hypdist}
\varrho_U(z) \le \frac{2}{\dist(z, \bd U)}, \quad z \in U
\end{equation}
for hyperbolic domains $U \subset \C$ (see e.g.~\cite{carlesongamelin}). 

\subsection*{Boundary behaviour of holomorphic maps}
For a simply connected domain $U \subset \C$ we consider a Riemann map 
\[
\varphi : \D \to U.
\]
By Fatou's Theorem (see e.g.~\cite[Theorem 1.3]{pommerenke-book}), radial (or angular) limits of $\varphi$ exist at Lebesgue almost all points of $\bdd$. This does not prevent the existence of other curves approaching those boundary points such that their images have more complicated limiting behaviour and accumulate in a non-degenerate continuum. 

\begin{defn}[{\bf Ambiguous points}{}] \label{def:amb} Let $h: \D \to \C$. A point $\zeta \in \bdd$ is called \emph{ambiguous} for $h$, if there exist two curves $\gamma_1, \gamma_2: [0,1) \to \D$ landing at $\zeta$, such that the sets of limit points of the curves $h(\gamma_1)$, $h(\gamma_2)$ (when $t \to 1^-$) are disjoint.
\end{defn}

Note that for a normal holomorphic function $h$, one of the two sets of limit points must be a non-degenerate continuum or otherwise the two landing points would be the same. This is a consequence of Lehto--Virtanen Theorem (see e.g.~\cite{LVActa} or \cite[Section 4.1]{pommerenke-book}). 

The set of ambiguous points is small for any function $h$, as shown in the following theorem (see e.g.~\cite[Corollary 2.20]{pommerenke-book}).

\begin{thm}[{\bf Bagemihl ambiguous points theorem}{}]
\label{thm:amb}
An arbitrary function $h: \D \to \C$ has at most countably many ambiguous points.
\end{thm}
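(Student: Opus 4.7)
The plan is to argue by contradiction, combining a countable pigeonhole reduction with a planar non-crossing argument in the spirit of Bagemihl's original proof. Suppose the set $E$ of ambiguous points of $h$ is uncountable. For each $\zeta \in E$ the two cluster sets $C_i^\zeta \subset \chat$ of $h$ along $\gamma_i^\zeta$ (as $t \to 1^-$) are closed and disjoint, so they can be separated by a pair $(V_1^\zeta, V_2^\zeta)$ of disjoint open sets drawn from a fixed countable base for the topology of $\chat$, say spherical balls with rational centres and radii. Since the collection of such pairs is countable, pigeonhole yields an uncountable subfamily $E_0 \subset E$ on which the pair $(V_1^\zeta, V_2^\zeta) = (V_1, V_2)$ is constant. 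A standard compactness argument --- using that $\chat \setminus V_i$ is compact and $C_i^\zeta \subset V_i$ --- then produces, for each $\zeta \in E_0$ and $i \in \{1,2\}$, a tail $\tilde\gamma_i^\zeta = \gamma_i^\zeta|_{[t_\zeta,1)}$ with $h(\tilde\gamma_i^\zeta) \subset V_i$.

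The central observation is a non-crossing property: for distinct $\zeta, \zeta' \in E_0$ the arcs $\tilde\gamma_1^\zeta$ and $\tilde\gamma_2^{\zeta'}$ must be disjoint in $\D$, since any common point would have $h$-image simultaneously in $V_1$ and in $V_2$. Thus at each $\zeta \in E_0$ we obtain two arcs of different \emph{colour} (prescribed by $V_1$ or $V_2$) in $\D$ landing at $\zeta$, subject to the rule that arcs of opposite colour attached to different ambiguous points never meet. From this setup I would derive a contradiction by planar topology: after one last pigeonhole localising initial pieces of both tails inside a common auxiliary disc $B \subset \D$ taken from a countable base, I attach to each $\zeta \in E_0$ a triod $T_\zeta$ (a set homeomorphic to the letter Y) consisting of these two coloured tail pieces together with a short closing chord in $B$, with the free end at $\zeta \in \bdd$. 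A careful choice of the chords, combined with the non-crossing property and the fact that the free vertices $\zeta$ lie on $\bdd$, makes the triods $\{T_\zeta : \zeta \in E_0\}$ pairwise disjoint, contradicting Moore's classical theorem that any pairwise disjoint family of triods in the plane is at most countable.

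The main obstacle is precisely this planar topology step. Since $h$ is assumed merely to be a function, with no measurability or continuity hypothesis, all geometric structure must come from the continuity of the curves $\gamma_i^\zeta$ together with the disjointness $V_1 \cap V_2 = \emptyset$; no property of $h$ itself is available. Upgrading the ``opposite-colour non-crossing'' constraint into the full pairwise disjointness required by Moore's theorem --- which in particular demands controlling same-colour arcs emanating from different $\zeta$'s --- is the technical crux, and is where the careful choice of auxiliary disc and closing chord enters.
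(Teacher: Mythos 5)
The paper does not prove this statement at all: it is quoted as a classical result of Bagemihl, with a reference to \cite[Corollary 2.20]{pommerenke-book}, so there is no in-paper argument to compare with. Judged on its own, your proposal reproduces the standard first half of Bagemihl's argument correctly (cluster sets along the two curves are compact in $\clC$ and disjoint, separation by a pair of open sets taken from a countable family, pigeonhole to a fixed pair $(V_1,V_2)$, passage to tails with $h(\tilde\gamma_i^\zeta)\subset V_i$, and the cross-colour disjointness $\tilde\gamma_1^\zeta\cap\tilde\gamma_2^{\zeta'}=\emptyset$). A small fixable point: a single rational ball need not contain a whole cluster set, so the separating sets should be finite unions of basis elements (still a countable family).

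The genuine gap is exactly the step you flag as ``the technical crux,'' and the device you propose for it would not work as stated. Moore's triod theorem requires the triods to be \emph{pairwise disjoint}, but the only disjointness your construction provides is between arcs of \emph{opposite} colour attached to \emph{different} points. Arcs of the same colour attached to different ambiguous points, as well as the closing chords, can intersect arbitrarily: since $h$ is a completely arbitrary function, nothing prevents, say, every $V_1$-coloured tail from passing through one and the same point of $\D$, and shrinking the tails does not help because curves landing at distinct boundary points can still meet arbitrarily close to $\bdd$. No further pigeonholing or ``careful choice of chords'' can remove these same-colour intersections, so the family $\{T_\zeta\}$ need not contain an uncountable pairwise disjoint subfamily and Moore's theorem cannot be invoked. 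What the argument actually needs at this point is Bagemihl's arc lemma: if to each $\zeta$ in a set $S\subset\bdd$ one attaches two curves $A_\zeta,B_\zeta$ landing at $\zeta$ with $A_\zeta\cap B_{\zeta'}=\emptyset$ for all $\zeta,\zeta'\in S$, then $S$ is at most countable. This lemma is the real content of the theorem, its proof is a separate and more delicate planar argument (not a disjoint-triod count), and your proposal leaves it unproven; acknowledging the obstacle does not close it.
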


As in \cite{ripponstallard-escaping}, we use the following Pfl\"uger-type estimate on the boundary behaviour of conformal maps.

\begin{thm}[{\cite[Theorem 9.24]{pommerenke-book}}]\label{thm:Pfluger}
Let $\Phi: \D \to \C$ be a conformal map, let $V \subset \Phi(\D)$ be a non-empty open set and let $E$ be a Borel subset of $\bdd$. Suppose that there exist $\alpha \in (0, 1]$ and $\beta > 0$ such that:
\begin{itemize}
\item[\rm (a)] $\dist(\Phi(0), V) \ge \alpha |\Phi'(0)|$,
\item[\rm (b)] $\ell(\Phi(\gamma) \cap V) \ge \beta$ for every curve $\gamma \subset \D$ connecting $0$ to $E$,
\end{itemize}
where $\ell$ denotes the linear $($i.e.~$1$-dimensional Hausdorff$)$ measure in $\C$. Then
\[
\ell(E) \le 2 \pi \capacity (E) < \frac{15}{\sqrt{\alpha}} \; e^{-\frac{\pi \beta^2}{ \area V}},
\]
where $\capacity (E)$ denotes the logarithmic capacity of $E\subset \partial \D$.
\end{thm}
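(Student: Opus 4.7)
My plan is to establish the exponential estimate $2\pi\capacity(E) < \frac{15}{\sqrt{\alpha}} e^{-\pi\beta^2/\area V}$ via the classical length--area (extremal length) method, since the companion inequality $\ell(E) \le 2\pi \capacity(E)$ is independent of $\Phi$ and is a standard comparison between one-dimensional Hausdorff measure and logarithmic capacity on $\bdd$; it reduces, by regularity of capacity, to the case of finite unions of arcs where both sides are computable, and uses no hypothesis from the theorem.

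For the exponential part, the heart of the argument is a lower bound on the extremal length $\lambda(\Gamma)$ of the family $\Gamma$ of curves in $\D$ joining $0$ to $E$. I would introduce the conformal test metric $\rho(z) := |\Phi'(z)|\,\mathbf{1}_{\Phi^{-1}(V)}(z)$ on $\D$. Since $\Phi$ is univalent, the change of variables formula gives
\[
\int_\D \rho(z)^2 \, dA(z) \;=\; \int_V dA \;=\; \area V,
\]
while hypothesis (b) says that for each $\gamma \in \Gamma$,
\[
\int_\gamma \rho(z)\, |dz| \;=\; \ell(\Phi(\gamma) \cap V) \;\ge\; \beta.
\]
Thus $\rho/\beta$ is admissible for $\Gamma$, and the definition of extremal length yields $\lambda(\Gamma) \ge \beta^2/\area V$.

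The second step converts this extremal-length lower bound into a capacity upper bound. The family $\Gamma$ consists of curves joining an interior point to a Borel boundary set in $\D$, and for such configurations there is a classical comparison of the shape $\capacity(E) \le C\, e^{-\pi \lambda(\Gamma)}$, proved by uniformising the complement of $E$ on the Riemann sphere and comparing the resulting conformal modulus with an annular modulus whose boundaries are level sets of the Green function. Hypothesis (a) enters precisely here: together with the Koebe one-quarter theorem and standard distortion estimates, it pins down the normalisation of $\Phi$ near the origin so that $\Phi(0)$ sits at controlled hyperbolic distance from $V$ inside $\Phi(\D)$, thereby fixing the constant $C = 15/(2\pi\sqrt{\alpha})$. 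The main obstacle I anticipate is the bookkeeping of the explicit numerical constant: the qualitative content, namely exponential decay at rate $\pi\beta^2/\area V$ together with a $\sqrt{\alpha}$ dependence on (a), is forced by the length--area argument above and by conformal invariance of extremal length, but the sharp constant $15$ requires careful tracking of universal constants through the Koebe estimates and through the modulus--capacity comparison.
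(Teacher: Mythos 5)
The paper itself gives no proof of this statement: it is quoted directly from Pommerenke's book (Theorem 9.24), so your sketch can only be judged on its own merits, and there it has a genuine gap in the step that converts the extremal-length bound into a capacity bound. The comparison $\capacity(E)\le C\,e^{-\pi\lambda(\Gamma)}$ cannot hold for the family $\Gamma$ of curves joining the \emph{single point} $0$ to $E$: a point is a null set for modulus, so $\lambda(\Gamma)=\infty$ for every $E$ (take $\rho(z)=\bigl(|z|\log(1/\varepsilon)\bigr)^{-1}$ on $\{\varepsilon<|z|<1\}$ and let $\varepsilon\to 0$), and your claimed inequality would force $\capacity(E)=0$ for all $E$, which is absurd. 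The classical Pfl\"uger comparison relates capacity to the extremal distance from a \emph{continuum} (a circle $\{|z|=r\}$), or equivalently to a reduced extremal distance with the divergent term $\tfrac{1}{2\pi}\log(1/r)$ subtracted; as you have set it up, the length--area estimate $\lambda(\Gamma)\ge\beta^2/\area V$ from Step 2 is true but vacuous for the capacity step.

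This is exactly where hypothesis (a) must enter, and not merely to ``pin down the constant''. By (a) and the Koebe distortion theorem ($|\Phi(z)-\Phi(0)|\le|\Phi'(0)|\,|z|/(1-|z|)^2$), the image $\Phi(\{|z|\le\alpha/4\})$ lies in the disc of radius $\alpha|\Phi'(0)|$ about $\Phi(0)$ and is therefore disjoint from $V$. Hence your test metric $\rho=|\Phi'|\mathbf{1}_{\Phi^{-1}(V)}$ vanishes on $\{|z|\le\alpha/4\}$, and every curve from the circle $\{|z|=\alpha/4\}$ to $E$, extended radially to $0$, still falls under hypothesis (b) while the added piece contributes nothing to $\Phi(\gamma)\cap V$; so the same length--area argument gives $\lambda(\Gamma_\alpha)\ge\beta^2/\area V$ for the family $\Gamma_\alpha$ of curves from $\{|z|=\alpha/4\}$ to $E$. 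Only now does the circle-to-boundary-set form of Pfl\"uger's theorem (proved earlier in the same chapter of Pommerenke's book) apply, yielding $\capacity(E)\le \bigl(c/\sqrt{r}\bigr)e^{-\pi\lambda(\Gamma_\alpha)}$ with $r\asymp\alpha$, which is precisely the source of the $1/\sqrt{\alpha}$ and, after tracking constants, of the factor $15$. Your Step 2 (the admissible metric with $\int_\D\rho^2\,dA=\area V$) is correct, and the first inequality $\ell(E)\le 2\pi\capacity(E)$ is indeed independent of $\Phi$ (arcs minimise capacity among circle subsets of given measure, and $\capacity$ of an arc of length $s$ is $\sin(s/4)\ge s/(2\pi)$), though it is not obtained by explicit computation on finite unions of arcs as you suggest.
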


\subsection*{Inner functions and Baker--Pommerenke--Cowen classification}

Our goal is to study the dynamics of a meromorphic map $f$ restricted to  the boundary $\partial U$ of a simply connected invariant Baker domain $U$. We consider the pull-back $g: \D \to \D$ of $f$ under a Riemann map $\varphi: \D \to U$, i.e. the map
\[
g = \varphi^{-1}\circ f \circ \varphi.
\]
It is known that the map $g$ is an inner function.

\begin{defn}[{\bf Inner function}{}] \label{def:inner} A holomorphic map $h: \D \to \D$ is called an \emph{inner function}, if radial limits of $h$ have modulus $1$ at Lebesgue-almost all points of $\bdd$. 
\end{defn}

In this paper we deal with the harmonic measure on the boundary of $U$. 

\begin{defn}[{\bf Harmonic measure}{}] Let $U \subset \C$ be a simply connected domain and $\varphi : \D \to U$ let be a Riemann map. A \emph{harmonic measure} $\omega = \omega(U, \varphi)$ on $\bd U$ is the image under $\varphi$ of the normalized Lebesgue measure on the unit circle $\bdd$. 
\end{defn}

For more information on harmonic measure refer e.g.~to \cite{garnettmarshall}. 

If $h$ is an inner function then all its iterates $h^n$, $n = 1, 2,\ldots$, are also inner functions (see e.g. \cite{baker-dominguez}), which implies that the \emph{boundary map} on $\bdd$ (which we will denote by the same symbol $h$), defined by radial (or angular) limits of $h$, generates a dynamical system of iterations of $h$, defined Lebesgue-almost everywhere on $\bdd$.

\begin{defn}[{\bf Singular points}{}] \label{def:singular} A point $\zeta \in \bdd$ is singular for an inner function $h$, if $h$ cannot be extended holomorphically to any neighbourhood of $\zeta$. 
\end{defn}

Note that if an inner function $h$ has finite degree, then it is a finite Blaschke product, which extends to the Riemann sphere as a rational map. In this case, all points in $\bdd$ are non-singular for $h$. On the contrary, infinite degree inner functions must have at least one singular boundary point.

The asymptotic behaviour of the iterates of a holomorphic map $h$ in $\D$ is described by the classical Denjoy--Wolff Theorem (see e.g.~\cite[Theorem~3.1]{carlesongamelin}).

\begin{thm}[{\bf Denjoy--Wolff Theorem}{}]  
\label{thm:denjoy}
Let $h: \D \to \D$ be a holomorphic map, which is not identity nor an elliptic M\"obius transformation. Then there exists a point $p \in \overline{\D}$, called
the \emph{Denjoy--Wolff point} of $h$, such that the iterations $h^n$ tend to $p$ as $n \to \infty$ uniformly on compact subsets of $\D$.
\end{thm}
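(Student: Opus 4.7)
The plan is to split into two cases based on whether $h$ has a fixed point inside $\D$, and to handle the second (more delicate) case via Wolff's invariant horodisks together with a normal families argument.

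Suppose first that $h$ has a fixed point $p \in \D$. By the Schwarz--Pick lemma, $|h'(p)| \le 1$, with equality if and only if $h$ is a M\"obius self-map of $\D$. A M\"obius self-map of $\D$ with an interior fixed point is elliptic, and by hypothesis $h$ is neither the identity nor elliptic M\"obius, so $|h'(p)| < 1$. A standard Koenigs-type linearisation on a small hyperbolic neighbourhood of $p$, combined with the fact that Schwarz--Pick makes $h$ a (non-strict) hyperbolic contraction on all of $\D$, then yields $h^n \to p$ locally uniformly.

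Now suppose $h$ has no fixed point in $\D$. For each $r \in (0,1)$ the map $h_r(z) = r \, h(z)$ sends $\overline{\D}$ into $r\overline{\D} \subset \D$, so Brouwer's fixed point theorem supplies $p_r \in \D$ with $h_r(p_r) = p_r$. Choose a sequence $r_n \nearrow 1$ with $p_{r_n} \to p \in \overline{\D}$. If $p$ lay in $\D$, continuity would force $h(p) = p$, contradicting our standing assumption; hence $p \in \bdd$. The next key step is \emph{Wolff's lemma}: every horodisk internally tangent to $\bdd$ at $p$ is mapped into itself by $h$. This I would prove by applying Schwarz--Pick to $h_r$ on the hyperbolic discs centred at $p_r$ (which $h_r$ maps into themselves because $p_r$ is a fixed point), observing that as $r \to 1$ these hyperbolic discs degenerate to horodisks at $p$, and passing to the limit to recover invariance under $h$ itself.

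With Wolff's lemma in hand, the conclusion follows from normality. Since $\{h^n\}$ is a uniformly bounded family of holomorphic maps on $\D$, Montel's theorem makes it normal. Let $F$ be any subsequential locally uniform limit. Iterating Wolff's lemma gives $h^n(D) \subset D$ for every horodisk $D$ at $p$, so $F(D) \subset \overline{D}$ for each such $D$. Intersecting over the nested family of horodisks at $p$ collapses to $\{p\}$, forcing $F(\D) \subset \{p\}$; by the open mapping theorem $F \equiv p$. Since every subsequential limit equals the constant $p$, the full sequence $h^n$ converges to $p$ locally uniformly on $\D$.

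The main obstacle is Wolff's lemma: upgrading the routine invariance of hyperbolic discs around the approximate fixed points $p_r$ into genuine invariance of horodisks at the boundary point $p$ requires careful control of the limiting geometry and a geometric formulation of Schwarz--Pick. Once Wolff's lemma is established, both the construction of the Denjoy--Wolff point and the convergence of iterates are essentially packaged by the intersection of the invariant horodisks.
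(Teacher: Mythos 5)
The paper does not prove this statement; it is quoted as the classical Denjoy--Wolff Theorem with a reference to Carleson--Gamelin, so your argument has to stand on its own. Most of it does: the interior fixed point case via Schwarz--Pick (the appeal to Koenigs linearisation is unnecessary -- a uniform strict contraction on an invariant compact hyperbolic ball, or the Schwarz lemma after moving $p$ to $0$, suffices), the construction of $p$ as a limit of fixed points $p_r$ of $rh$ (apply Brouwer on a slightly smaller closed disc, since $h$ need not extend to $\overline{\D}$), and Wolff's lemma obtained by letting the invariant hyperbolic discs about $p_r$ degenerate to horodisks at $p$ are all standard and correct in outline.

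The genuine gap is in the final step. From $h^n(D)\subset D$ for every horodisk $D$ at $p$ you may conclude, for a subsequential limit $F$ of $\{h^n\}$ and a \emph{fixed} $z\in\D$, only that $F(z)\in\overline{D}$ for those horodisks $D$ which contain $z$; no single $z$ lies in all horodisks, so you cannot ``intersect over the nested family'' and collapse to $\{p\}$. The intersection of the closed horodisks containing a given $z$ is the closed horodisk through $z$, not $\{p\}$; indeed the identity map satisfies $F(D)\subset\overline{D}$ for every horodisk at $p$ and is not the constant $p$, so the implication you use is false as stated. What is missing is the proof that every locally uniform subsequential limit of $h^n$ is \emph{constant}. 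A standard way to supply it: since $\varrho_\D(h^n(z),h^n(w))$ is non-increasing in $n$, a non-constant limit $F=\lim h^{n_k}$ would give two distinct points of $\D$ whose hyperbolic distance is preserved by $h$, forcing $h$ to be an automorphism of $\D$; having no interior fixed point it would be a hyperbolic or parabolic M\"obius map, for which the conclusion is checked directly. If instead every limit is a constant $c\in\overline{\D}$, then $c\in\D$ is impossible (from $h^{n_k}\to c$ and $h^{n_k+1}\to c$ one gets $h(c)=c$), and $c\in\bdd$ is pinned to $p$ by applying Wolff's lemma to one orbit starting inside an arbitrarily small horodisk at $p$, whose closure meets $\bdd$ only at $p$. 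With that constancy step inserted, your horodisk framework does complete the proof; without it, the last paragraph of your argument does not go through.
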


In this paper we deal with the case $p\in\bdd$. Then the dynamics of the map can be divided into three types, according to the \emph{Baker--Pommerenke--Cowen} classification of such maps.
The following result describes this classification, showing the existence of a semiconjugacy between the dynamics of $h$ and some M\"obius transformation (see \cite[Theorem 3.2]{cowen} and \cite[Lemma 1]{konig}).

\begin{thm}[{\bf Cowen's Theorem}{}]\label{thm:cowen}
Let $h: \D \to \D$ be a holomorphic map with the Denjoy--Wolff point $p \in \bdd$. Then there exists a simply connected domain $V \subset \D$, a domain 
$\Omega$ equal to $\HH = \{z\in\C: \Re(z) > 0\}$ or $\C$, a holomorphic map $\psi: \D \to \Omega$, 
and a M\"obius transformation $T$ mapping $\Omega$ onto itself, such that:
\begin{itemize}
\item[$(a)$] $V$ is absorbing in $\D$ for $h$, i.e.~$h(V) \subset V$ and for every compact set $K \subset \D$ there exists $n \ge 0$ with $h^n(K) \subset V$,
\item[$(b)$] $\psi(V)$ is absorbing in $\Omega$ for $T$,
\item[$(c)$] $\psi \circ h = T \circ \varphi$ on $\D$,
\item[$(d)$] $\psi$ is univalent on $V$.
\end{itemize}
Moreover, up to a conjugation of $T$ by a M\"obius transformation preserving $\Omega$, one of the following three cases holds:
\begin{align*}
&\Omega = \HH,\; T(\omega) = a\omega \;\text{ for some } a > 1 &&(\text{hyperbolic 
type}),\\
&\Omega = \HH,\; T(\omega) = \omega \pm i &&(\text{simply parabolic type}),\\
&\Omega = \C, \; T(\omega) = \omega + 1 &&(\text{doubly parabolic type}).
\end{align*}
\end{thm}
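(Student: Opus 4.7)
The plan is to transfer the problem to the right half-plane via a M\"obius change of coordinates sending the Denjoy--Wolff point $p$ to $\infty$, split into cases by the Julia--Carath\'eodory angular derivative, and construct the semiconjugacy $\psi$ as a suitably normalized limit of iterates of $h$.

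First, I would conjugate by a M\"obius map $\sigma: \D \to \HH$ with $\sigma(p) = \infty$ and work with $H = \sigma \circ h \circ \sigma\inv : \HH \to \HH$. By Denjoy--Wolff, $H^n(z) \to \infty$ for every $z \in \HH$, and the Julia--Carath\'eodory theorem gives a non-tangential angular derivative $c = 1/\alpha \in [1,\infty)$, where $\alpha = h'(p) \in (0,1]$, with $H(z)/z \to c$ non-tangentially at $\infty$. Julia's lemma then supplies an $H$-forward-invariant region $V_0 \subset \HH$ (a truncated Stolz-type sector when $c > 1$, a half-plane $\{\Re(w) > R\}$ when $c = 1$), inside which the geometry of $H$ can be controlled; passing back to $\D$ produces the absorbing domain $V$ of property $(a)$.

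Next, I would construct $\psi$ as a limit of normalized iterates on $V$, splitting into cases. In the \emph{hyperbolic case} $(\alpha < 1)$, set $\psi_n(z) = \alpha^n H^n(z)$; using the non-tangential estimate $H(z) \sim cz$ and eventual univalence of $H|_V$, one shows $\{\psi_n\}$ is a normal family and converges on $V$ to a univalent holomorphic $\psi: V \to \HH$ satisfying $\psi \circ H = (1/\alpha)\psi$. The map extends uniquely to $\D$ via the functional equation, once any iterate enters $V$, giving $T(\omega) = a\omega$ with $a = 1/\alpha > 1$. In the \emph{parabolic case} $(\alpha = 1)$ one must renormalize additively: fix $z_0 \in V$ and set
\[
\psi_n(z) = \frac{H^n(z) - H^n(z_0)}{H^{n+1}(z_0) - H^n(z_0)}.
\]
Distortion estimates in the horocyclic absorbing region combined with hyperbolic contraction yield normality and a univalent limit $\psi$ on $V$ with $\psi \circ H = \psi + 1$; as before, $\psi$ extends to $\D$ by the functional equation.

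The final dichotomy is governed by the \emph{hyperbolic step} $s = \lim_{n \to \infty} \varrho_\D(h^n(z), h^{n+1}(z))$, which exists, is independent of $z$ by Schwarz--Pick, and lies in $[0,\infty)$. When $s > 0$ (simply parabolic), $\psi$ is a hyperbolic contraction with positive defect, so $\psi(\D)$ remains inside a proper half-plane; conjugating $T$ by a M\"obius transformation of $\HH$ brings it into the normal form $T(\omega) = \omega \pm i$ on $\HH$. When $s = 0$ (doubly parabolic), $\psi(\D)$ cannot lie in any proper half-plane and therefore equals $\C$, yielding $\Omega = \C$ and $T(\omega) = \omega + 1$. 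Property $(b)$ then follows formally from $(a)$ together with the semiconjugacy $(c)$.

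The main obstacle, as I see it, is establishing convergence, univalence, and the correct image of the renormalized iterates in the parabolic case: normality of $\{\psi_n\}$ demands uniform control beyond what Julia--Carath\'eodory provides directly, and must be obtained by combining hyperbolic contraction inside the horodisc $V_0$ with sharper asymptotics of $H$ near the boundary point $\infty$. Deciding whether $\psi(\D)$ stays in $\HH$ or fills all of $\C$ is the most delicate step, and this is exactly where the vanishing or non-vanishing of the hyperbolic step $s$ enters.
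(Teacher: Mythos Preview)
The paper does not prove this statement: Theorem~\ref{thm:cowen} is quoted in the preliminaries as a known result, with an explicit citation to Cowen's original paper and to K\"onig. There is therefore no ``paper's own proof'' to compare against.

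That said, your sketch follows the classical route of Cowen and of Baker--Pommerenke: transfer to the half-plane, read off the angular derivative at the Denjoy--Wolff point via Julia--Carath\'eodory, and build $\psi$ as a normalized limit of iterates (multiplicative normalization in the hyperbolic case, the additive quotient normalization in the parabolic case), with the simply/doubly parabolic split governed by the hyperbolic step. Two points deserve care if you actually write this out. First, the phrase ``eventual univalence of $H|_V$'' is misleading: $h$ (hence $H$) need not be univalent anywhere, and univalence of the \emph{limit} $\psi$ on $V$ is obtained by a separate Hurwitz-type argument, not by assuming $H$ is injective. Second, in the doubly parabolic case the conclusion is not that $\psi(\D)=\C$, but that $\psi(\D)$ fails to sit inside any half-plane invariant under $T$, which forces the ambient $\Omega$ to be $\C$; the image $\psi(\D)$ itself may well be a proper subset. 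With those corrections your outline matches the standard literature proof.
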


In view of this theorem, we say that $f|_U$ is of hyperbolic, simply parabolic or doubly parabolic type if the same holds for the associated inner function $g = \varphi^{-1}\circ f \circ \varphi$. Generally, it is not obvious how to determine the type of Baker domain just by looking at the dynamical plane, since it can depend on the dynamical properties of $f$ and geometry of the domain $U$.

If an inner function extends to a neighbourhood of the Denjoy--Wolff point $p \in \bdd$, its type can be determined by the local behaviour of its trajectories near $p$ (see \cite{bergweiler-sing,faghen}). More precisely, we have the following.

\begin{thm}[{\cite[Theorem 2]{bergweiler-sing}}]\label{thm:traj}
Let $h: \D \to \D$ be an inner function with the non-singular Denjoy--Wolff point $p \in \bdd$. Then the following hold.
\begin{itemize}
\item[\rm (a)] If $h$ is of hyperbolic type, then $p$ is an attracting fixed point of $h$ with $h'(p) \in (0, 1)$.

\item[\rm (b)] If $h$ is of simply parabolic type, then $p$ is a parabolic fixed point of $h$ of multiplicity $2$, i.e.~$h(z) = p + (z - p) + \alpha (z - p)^2 + O((z-p)^3)$ as $z \to p$ for some $\alpha \neq 0$. Moreover, 
\[
\varrho_\D(h^{n+1}(z), h^n(z)) = b + \frac{c}{n^3} + O\left(\frac{1}{n^4}\right)
\]
for $z \in \D$ as $n \to \infty$, where $b = \lim_{n\to \infty}\varrho_\D(h^{n+1}(z), h^n(z)) >0$ and $c \ge 0$.

\item[\rm (c)] If $h$ is of doubly parabolic type, then $p$ is a parabolic fixed point of $h$ of multiplicity $3$, i.e.~$h(z) = p + (z - p) + \beta (z - p)^3 + O((z-p)^4)$ as $z \to p$ for some $\beta \neq 0$. Moreover, 
\[
\varrho_\D(h^{n+1}(z), h^n(z)) = \frac{1}{2n} + \frac{d \ln n}{n^2} + O\left(\frac{1}{n^2}\right)
\]
for $z \in \D$ as $n \to \infty$, where $d\in \R$.
\end{itemize}

\end{thm}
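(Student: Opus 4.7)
My plan is to reduce the statement to a local study at $p$, combining the Julia--Wolff--Carathéodory theorem with the Leau--Fatou flower theorem and the Baker--Pommerenke--Cowen semiconjugacy from Theorem~\ref{thm:cowen}, and then to extract the hyperbolic distance asymptotics via a Fatou coordinate at $p$.

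First, since $p$ is non-singular for $h$, the map extends holomorphically to a neighbourhood $N$ of $p$, and because the orbits $h^n(z)$ converge to $p$ for $z\in\D$, continuity forces $h(p)=p$. The Julia--Wolff--Carathéodory theorem applied at the Denjoy--Wolff point gives $\lambda:=h'(p)\in(0,1]$ (the real value is the angular derivative, which in the non-singular case coincides with the ordinary derivative). Write the Taylor expansion
\[
h(z) = p + \lambda(z-p) + c_k(z-p)^k + O((z-p)^{k+1}),\qquad c_k\neq 0.
\]
For (a) I would argue: if $\lambda<1$, the fixed point $p$ is (locally) attracting, and by Koenigs linearization $h$ is holomorphically conjugate near $p$ to $\omega\mapsto \lambda\omega$; matching this linearization to Cowen's model $T(\omega)=a\omega$ on $\H$ via the semiconjugacy $\psi$ and using that $\psi$ is univalent on the absorbing set, one identifies $a=1/\lambda$, so in particular $\lambda\in(0,1)$. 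The reverse implication (that $\lambda=1$ rules out the hyperbolic case) is immediate from the same linearization since Cowen's $a>1$ forces contraction with factor $1/a<1$.

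For (b) and (c), the case $\lambda=1$ gives a parabolic fixed point of multiplicity $k\ge 2$ with leading nontrivial term $c_k(z-p)^k$. I would appeal to the Leau--Fatou flower theorem: the local dynamics has $k-1$ attracting petals arranged symmetrically around $p$. Because $h$ preserves $\D$ and $p\in\bd\D$, a geometric count of which petals can fit inside $\D$ (equivalently, examining the sign of $c_k$ along the inward normal) shows that the only admissible configurations are $k=2$ (one petal tangent to $\bd\D$ from inside, corresponding to one parabolic attracting direction along the boundary normal) and $k=3$ (two petals entering $\D$ tangentially to $\bd\D$). I would then match these two cases with Cowen's models by noting that a single petal supports exactly one Fatou coordinate $\psi$ with image a half-plane $\H$ (simply parabolic, $T(\omega)=\omega\pm i$), while the symmetric pair of petals at a multiplicity-3 parabolic point combine into a Fatou coordinate with image $\C$ (doubly parabolic, $T(\omega)=\omega+1$). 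Uniqueness in Cowen's theorem then forces the stated type.

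The hyperbolic distance asymptotics are the main technical step. Fix a Fatou coordinate $\psi$ near $p$ conjugating $h$ to $T$ as above, and use $\varrho_\D(h^{n+1}(z),h^n(z))=\varrho_{h^n(V)}(h^{n+1}(z),h^n(z))+o(1)$ with $V$ an absorbing petal, so that the left-hand side is comparable to $\varrho_{\psi(V)}(T^{n+1}\psi(z),T^n\psi(z))$ up to lower-order boundary corrections. In the simply parabolic case $T(\omega)=\omega+i$ on $\H$ so the translation length in the half-plane hyperbolic metric equals a positive constant $b$, and the correction to $b$ comes from how $\psi(V)$ differs from $\H$ near infinity; a careful expansion of $\psi$ — using the standard formula $\psi(z)=-1/(c_2(z-p))+\alpha\log(z-p)+\cdots$ — yields the $c/n^3+O(n^{-4})$ correction stated in (b). In the doubly parabolic case, iteration of a multiplicity-3 parabolic map gives $h^n(z)-p\sim (2c_3 n)^{-1/2}$, so consecutive iterates lie at Euclidean distance $\asymp n^{-3/2}$ while $\dist(h^n(z),\bd\D)\asymp n^{-1/2}$; pairing these with \eqref{eq:hyp} and expanding one order further through the Fatou coordinate with its logarithmic correction $\alpha\log$-term gives precisely $1/(2n) + d\log n/n^2 + O(1/n^2)$. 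The hard part will be justifying this last expansion rigorously and controlling the discrepancy between $\varrho_\D$ and the hyperbolic metric of the absorbing petal up to the $\log n/n^2$ order — this is the content one would import from \cite{bergweiler-sing} and Cowen's original analysis.
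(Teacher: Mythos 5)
First, a structural remark: the paper does not prove Theorem~\ref{thm:traj} at all --- it is quoted from \cite[Theorem 2]{bergweiler-sing} --- so there is no internal proof to compare with, and your sketch is an attempt to reprove the cited result. Your overall route (holomorphic extension at the non-singular point $p$, Julia--Wolff--Carath\'eodory for (a), parabolic multiplicity plus Fatou coordinates for (b) and (c)) is the natural one, and part (a) is essentially fine, since the dichotomy ``angular derivative $q<1$ versus $q=1$'' is exactly the hyperbolic/parabolic distinction already recorded in Section~2 of the paper.

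The gaps are in (b) and (c). (i) The ``geometric count of petals'' is asserted rather than proved, and the geometry you describe is wrong: after noting that $h$ preserves $\bdd$ near $p$, so that in coordinates sending $p$ to $0$ and $\bdd$ to $\R$ one has $h(z)=z+a_mz^m+\cdots$ with $a_m$ real, the multiplicity-$2$ case has its single attracting direction \emph{tangent} to $\bdd$ (the petal straddles the circle), not along the inward normal; and in the multiplicity-$3$ case the two attracting directions are the normals, with one petal on the $\D$-side and one on the exterior side --- not ``two petals entering $\D$''. Excluding $m\ge 4$ and matching $m=2$ with the simply parabolic and $m=3$ with the doubly parabolic Cowen model needs an actual argument (invariance of $\D$ plus the Denjoy--Wolff property, and then e.g.\ the criterion $\varrho_\D(h^{n+1}(z),h^n(z))\to 0$ of Theorem~\ref{thm:doubly}, or K\"onig's results, rather than ``uniqueness in Cowen's theorem'' applied to a local Fatou coordinate). (ii) The asymptotics are the heart of the statement, and your reduction $\varrho_\D(h^{n+1}(z),h^n(z))=\varrho_{h^n(V)}(h^{n+1}(z),h^n(z))+o(1)$ is far too coarse: in (b) the claimed error after the constant main term is $O(1/n^{3})$, and in (c) the second-order term is $d\ln n/n^{2}$, so an $o(1)$ comparison between $\varrho_\D$ and the hyperbolic metric of an absorbing petal cannot detect them. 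One needs the full parabolic iteration expansion (including the logarithmic term of the Fatou coordinate) \emph{together with} a comparison of the two metrics to the same order --- precisely the technical content of Bergweiler's proof, which you explicitly ``import''. So as a self-contained proof the proposal is incomplete; its correct leading-order heuristics plus the citation amount to what the paper already does by quoting \cite[Theorem 2]{bergweiler-sing}.
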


An easy consequence of this theorem is the following corollary.

\begin{cor}\label{cor:traj}
Let $h: \D \to \D$ be an inner function of finite degree larger than $1$ with the Denjoy--Wolff point $p \in \bdd$. Denote by the same symbol the holomorphic extension of $h$ to $\clC$. Then:
\begin{itemize}

\item[\rm (a)] If $h|_\D$ is of hyperbolic type, then the attracting basin of $p$ is connected, non-simply connected and contains $\D \cup (\clC \setminus \overline\D)$, so $\calj(h) \subsetneq \bdd$.

\item[\rm (b)]
If $h|_\D$ is of simply parabolic type, then the parabolic basin of $p$ consists of one non-simply connected attracting petal, which contains $\D \cup (\clC \setminus \overline\D)$, so $\calj(h) \subsetneq \bdd$.

\item[\rm (c)]
If $h|_\D$ is of doubly parabolic type, then the parabolic basin of $p$ consists of two simply connected attracting petals $\D$ and $\clC \setminus \overline\D$, so $\calj(h) = \bdd$.

\end{itemize}

\end{cor}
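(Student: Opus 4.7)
The plan is to exploit the reflection symmetry $h(1/\bar z)=1/\overline{h(z)}$ enjoyed by every finite Blaschke product: $h$ extends to a degree-$n$ rational self-map of $\clC$ preserving each of $\D$, $\bdd$, and $\clC\setminus\overline{\D}$. The Denjoy--Wolff theorem applied on $\D$, together with its reflection on the exterior, gives $h^k\to p$ locally uniformly on $\D\cup(\clC\setminus\overline{\D})$, so this union lies in $\F(h)$ and $\J(h)\subset\bdd$. The remainder of the proof feeds the local information at $p$ supplied by Theorem~\ref{thm:traj} into this global picture, using also that $|h'|>0$ on $\bdd$ (standard for finite Blaschke products), so that $h|_{\bdd}$ is an unramified $n$-fold covering of the circle.

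For cases (a) and (b) the first step is to exhibit an open arc of $\bdd$ through $p$ (minus $p$ itself in (b)) inside $\F(h)$. In (a) this is immediate from $p\in\F(h)$. In (b), writing $h(z)=p+(z-p)+\alpha(z-p)^2+\cdots$ and imposing $h\circ\sigma=\sigma\circ h$ for $\sigma(z)=1/\bar z$ forces $\alpha\in i\R$; the unique attracting direction is then normal to $\bdd$, the Leau--Fatou petal of angular opening $\pi$ crosses $\bdd$, and the desired arc is obtained. An openness argument (every point of $\F(h)\cap\bdd$ has a disc neighbourhood in $\F(h)$ meeting $\D$) then shows that the Fatou component $F_0$ containing $\D$ absorbs all of $\F(h)$, so $F_0=\F(h)$ is connected and contains $\D\cup(\clC\setminus\overline{\D})$, with $\J(h)\subsetneq\bdd$. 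For non-simply-connectedness, let $I_0=F_0\cap\bdd$ be the component around $p$. A direct analysis of $h|_{\overline{I_0}}$ shows that the two endpoints of $I_0$, both in $\J(h)$, are permuted by $h$, so $h(I_0)=I_0$; the unramified $n$-fold cover $h|_{\bdd}$ then has $n$ disjoint lifts of $I_0$ in $\F(h)\cap\bdd$, and since only one can be $I_0$ itself the remaining $n-1\ge 1$ lifts furnish additional components of $\F(h)\cap\bdd$. Thus $\J(h)$ has at least two components and $F_0=\clC\setminus\J(h)$ is multiply connected.

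For case (c) the expansion $h(z)=p+(z-p)+\beta(z-p)^3+\cdots$ combined with the same symmetry gives $\beta\in\R$, while checking Denjoy--Wolff convergence from $z=p(1-s)\in\D$ along the inward normal forces $\beta<0$. The two attracting directions are then $\pm p$, normal to $\bdd$, and the two Leau--Fatou petals of opening $\pi/2$ each lie \emph{strictly} in $\D$ and in the exterior respectively. The crucial step is to rule out any Fatou component meeting $\bdd\setminus\{p\}$: if the Fatou component $F_{\D}$ containing $\D$ were larger than $\D$, it would contain an open arc $J\subset\bdd\setminus\{p\}$, and each $z\in J$ would satisfy $h^k(z)\to p$ with $h^k(z)\in\bdd$, so $z$ would approach $p$ tangentially along $\bdd$. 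But a direct expansion in the angular coordinate $z=pe^{it}$ gives $h(pe^{it})=pe^{i\phi(t)}$ with
\[
\phi(t)=t-\beta t^3+O(t^5),
\]
and $\beta<0$ makes the cubic coefficient strictly positive, so $\phi(t)>t$ for small $t>0$ and orbits on $\bdd$ near $p$ are pushed away from $p$, a contradiction. Hence $F_{\D}=\D$ and by symmetry $\clC\setminus\overline{\D}$ is also a Fatou component; together they form the parabolic basin, each is simply connected, and $\J(h)=\bdd$.

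The main technical obstacle is the analysis in case (c): first forcing the sign $\beta<0$ from Denjoy--Wolff convergence, and then deducing the strictly repelling character of the tangential dynamics at $p$ on $\bdd$, which is precisely the mechanism that separates (c) from (b). A subsidiary difficulty is the covering-theoretic argument for non-simply-connectedness in (a)--(b), which needs $h|_{\bdd}$ to be unramified $n$-fold and $h(I_0)=I_0$; both follow from standard facts about finite Blaschke products together with a careful tracking of the endpoints of $I_0$ in $\J(h)$.
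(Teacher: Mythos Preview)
The paper states this corollary without proof, calling it an easy consequence of Theorem~\ref{thm:traj}. Your overall strategy---combining the local petal structure at $p$ from Theorem~\ref{thm:traj} with the reflection symmetry $h(1/\bar z)=1/\overline{h(z)}$ of finite Blaschke products---is the intended one, and cases (a) and (c) are essentially correct. In (c) you should either conjugate by a rotation to arrange $p=1$ at the outset or replace $\beta$ by $\beta p^2$ throughout: what the symmetry actually gives is $\beta p^2\in\R$, the Denjoy--Wolff constraint along the inward normal gives $\beta p^2<0$, and the circle expansion reads $\phi(t)=t-\beta p^2\,t^3+O(t^4)$ (not $O(t^5)$, though this is harmless for the repelling conclusion).

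There is, however, a genuine error in case (b). The relation $h\circ\sigma=\sigma\circ h$ forces $\alpha p\in i\R$ (not $\alpha\in i\R$ unless $p=\pm1$), and the attracting direction $v$, determined by $\alpha v\in\R_{<0}$, is then a real multiple of $ip$---that is, \emph{tangent} to $\bdd$ at $p$, not normal. With the normal direction you claim, a petal of angular opening $\pi$ would lie entirely on one side of $\bdd$ and would not cross it, so your argument as written does not yield the arc in $\F(h)\cap\bdd$ that you need. With the correct tangent direction, the petal of opening $\pi$ is bisected by $\bdd$: one side of $\bdd\setminus\{p\}$ near $p$ lies along the attracting axis and is contained in the petal (this is exactly the arc $J^-$ appearing in the paper's proof of Lemma~\ref{lem:main}), while the opposite side lies along the repelling axis. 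Once this correction is made, the petal connects $\D$, the exterior, and an arc of $\bdd$, and the remainder of your argument for (b) goes through unchanged.
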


The characterization of doubly parabolic type can be extended to the general case in the following way.
 
\begin{thm}[{\cite[Theorem~A]{absorb}}, \cite{konig}]\label{thm:doubly}
Let $f: \C \to \clC$ be a meromorphic map with a simply connected invariant Baker domain $U$. Then the following conditions are equivalent.
\begin{itemize}
\item[\rm (a)] $f|_U$ is of doubly parabolic type.
\item[\rm (b)] $\varrho_U(f^{n+1}(z), f^n(z)) \to 0$ as $n \to \infty$ for some $($every$)$ $z \in U$.
\item[\rm (c)] $|f^{n+1}(z)- f^n(z)|/\dist(f^n(z), \bd U) \to
0$ as $n \to \infty$ for some $($every$)$ $z \in U$.
\end{itemize}
\end{thm}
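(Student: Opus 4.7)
The plan is to handle (a) $\Leftrightarrow$ (b) by pulling back to the unit disc via a Riemann map and invoking the Cowen--K\"onig classification of self-maps of $\D$, and then to deduce (b) $\Leftrightarrow$ (c) from the standard Koebe-type comparison between the hyperbolic density of a simply connected planar domain and the reciprocal of the Euclidean distance to its boundary.

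For (a) $\Leftrightarrow$ (b), let $\varphi: \D \to U$ be a Riemann map and let $g = \varphi^{-1} \circ f \circ \varphi$ be the associated inner function. Since $\varphi$ is a hyperbolic isometry, writing $\zeta = \varphi^{-1}(z)$ we have $\varrho_U(f^{n+1}(z), f^n(z)) = \varrho_\D(g^{n+1}(\zeta), g^n(\zeta))$, so (b) transfers verbatim to a statement about the iterates of $g$. Because $U$ is a Baker domain, $g$ has no fixed point in $\D$; its Denjoy--Wolff point $p \in \bdd$ exists by Theorem~\ref{thm:denjoy}, and Cowen's Theorem~\ref{thm:cowen} provides an absorbing set $V \subset \D$ and a univalent semiconjugacy $\psi$ of $g|_V$ to one of the three M\"obius models $T$. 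By the Schwarz--Pick lemma the sequence $\varrho_\D(g^{n+1}(\zeta), g^n(\zeta))$ is non-increasing, and the equivalence together with the independence of $z$ reduces to the fact proved in \cite{konig} that its limit $L$ does not depend on $\zeta$ and vanishes precisely in the doubly parabolic case. One sees this directly from the model: in the doubly parabolic case ($\Omega = \C$, $T(\omega) = \omega + 1$), the absorbing image $\psi(V) \subset \C$ is forward-invariant under $T$, hence contains all of its translates $\psi(V) + n$, which forces its hyperbolic density along the orbit $\psi(\zeta) + n$ to decay to zero; pulling back via the isometry $\psi: V \to \psi(V)$ and using $\varrho_\D \leq \varrho_V$ yields $\varrho_\D(g^n(\zeta), g^{n+1}(\zeta)) \to 0$. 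In the hyperbolic and simply parabolic cases ($\Omega = \HH$), a direct computation gives $\varrho_\HH(T^n\omega, T^{n+1}\omega) \geq c > 0$ along orbits, and since $\psi(V) \subset \HH$ we have $\varrho_{\psi(V)} \geq \varrho_\HH$, which combined with the comparability of $\varrho_V$ and $\varrho_\D$ along orbits converging to $p$ produces a positive lower bound on the successive hyperbolic steps.

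For (b) $\Leftrightarrow$ (c), I would combine the upper bound \eqref{eq:hypdist} with the matching Koebe $1/4$ lower bound $\varrho_U(z) \geq 1/(2\dist(z, \bd U))$ valid for simply connected $U$. Writing $z_n = f^n(z)$, $d_n = \dist(z_n, \bd U)$ and $r_n = |z_{n+1} - z_n|/d_n$, whenever $r_n < 1$ the segment from $z_n$ to $z_{n+1}$ lies in $U$ with $\dist(w, \bd U) \geq d_n - |w - z_n|$ along it, so integrating the upper bound on $\varrho_U$ gives
\[
\varrho_U(z_{n+1}, z_n) \leq -2 \log(1 - r_n).
\]
A matching lower bound proportional to $r_n$ (for $r_n$ small) is obtained by pulling back via a Riemann map $\D \to U$ sending $0$ to $z_n$ and applying the Koebe distortion theorem to relate $|z_{n+1} - z_n|$, $d_n$ and $|\varphi^{-1}(z_{n+1})|$. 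Together these show $r_n \to 0$ if and only if $\varrho_U(z_{n+1}, z_n) \to 0$; combined with the $\zeta$-independence of the limit in (b), this also yields the ``for some (every)'' equivalence in (c).

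The main obstacle is the content of \cite{konig}, namely the intrinsic nature of the limit $L = \lim_n \varrho_\D(g^{n+1}(\zeta), g^n(\zeta))$ and its characterisation $L = 0$ iff $g$ is of doubly parabolic type. Once that ingredient is accepted, the rest of the proof is routine Koebe-type bookkeeping.
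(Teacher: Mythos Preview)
The paper does not prove this statement: Theorem~\ref{thm:doubly} is stated in the Preliminaries section as a citation of \cite[Theorem~A]{absorb} and \cite{konig}, with no proof given. So there is no ``paper's own proof'' to compare against; the result is simply imported as background.

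That said, your sketch is broadly the right shape. Two comments. First, in the direction ``hyperbolic or simply parabolic $\Rightarrow$ $L>0$'', the chain of inequalities you write actually goes the wrong way: from $\psi(V)\subset\HH$ you get $\varrho_{\psi(V)}\ge\varrho_\HH$, hence $\varrho_V(g^n\zeta,g^{n+1}\zeta)\ge\varrho_\HH(T^n\omega,T^{n+1}\omega)$, but since $V\subset\D$ you only know $\varrho_V\ge\varrho_\D$, which does \emph{not} give a lower bound on $\varrho_\D(g^n\zeta,g^{n+1}\zeta)$. Bridging this gap---showing that $\varrho_\D$ and $\varrho_V$ are comparable along orbits converging to $p$---is precisely the nontrivial content of K\"onig's argument, as you correctly flag at the end. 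Second, your (b)~$\Leftrightarrow$~(c) via the two-sided Koebe bounds $1/(2\dist)\le\varrho_U\le 2/\dist$ for simply connected $U$ is the standard route and is fine.
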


\subsection*{Ergodic theory of inner functions}

First, we recall some basic notions used in abstract ergodic theory (for more details, refer e.g. to \cite{aaronson-book,petersen-book}).

\begin{defn}\label{defn:erg} Let $\mu$ be a measure on a space $X$ and let $T: X \to X$ be a $\mu$-measurable transformation. Then we say that $T$ (or $\mu$) is:
\begin{itemize}
\item[$\circ$] \emph{exact}, if for every measurable $E \subset X$, such that for every $n$ we have $E = T^{-n}(X_n)$ for some measurable $X_n \subset X$, there holds $\mu(E) = 0$ or $\mu(X \setminus E) = 0$,
\item[$\circ$] \emph{ergodic}, if for every measurable $E \subset X$ with $E = T^{-1}(E)$ there holds $\mu(E) = 0$ or $\mu(X \setminus E) = 0$,
\item[$\circ$] \emph{recurrent}, if for every measurable $E \subset X$ and $\mu$-almost every $x \in E$ there exists an infinite sequence of positive integers $n_k \to \infty$, $k = 1, 2, \ldots$, such that $T^{n_k}(x) \in E$. 
\item[$\circ$] \emph{conservative}, if for every measurable $E \subset X$ of positive measure $\mu$ there exist $n, m \ge 0$, $n \neq m$, such that $T^{-n}(E) \cap T^{-m}(E) \neq \emptyset$ $($i.e.~$T$ has no \emph{wandering} sets of positive measure),
\item[$\circ$] \emph{preserving} $\mu$ (in this case we say that $\mu$ is \emph{invariant}), if for every measurable $E \subset X$ we have $\mu(T^{-1}(E)) = \mu(E)$,
\item[$\circ$] \emph{non-singular}, if for every measurable $E \subset X$ we have $\mu(T^{-1}(E)) = 0$ if and only if $\mu(E) = 0$.
\end{itemize}
\end{defn}

Obviously, exactness implies ergodicity and invariance implies non-singularity. Moreover, there holds

\begin{thm}[\cite{halmos}]\label{thm:halmos}
The measure $\mu$ is conservative if and only if it is recurrent.
\end{thm}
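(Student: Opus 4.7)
The plan is to treat the two implications separately, since the easy direction follows from a single application of the definitions while the nontrivial direction requires a standard measure-theoretic trick attributed to Halmos.

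For the direction recurrence $\Rightarrow$ conservativity, I would take an arbitrary measurable set $E \subset X$ with $\mu(E) > 0$ and invoke the recurrence hypothesis to produce at least one point $x \in E$ together with an integer $n \geq 1$ satisfying $T^n(x) \in E$. Then $x \in T^{-0}(E) \cap T^{-n}(E)$, which is exactly the nonwandering property with the admissible pair $m = 0$, $n \geq 1$. This is essentially a tautology.

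The real content is in the converse. Here I would introduce the ``bad set''
\[
B = \{x \in E : T^k(x) \notin E \text{ for all } k \geq 1\},
\]
consisting of points of $E$ that never return. The crucial structural observation is that the preimages $T^{-n}(B)$, $n \geq 0$, are pairwise disjoint. Indeed, if $0 \leq n < m$ and $x \in T^{-n}(B) \cap T^{-m}(B)$, then $y := T^n(x) \in B$ and $T^{m-n}(y) = T^m(x) \in B \subset E$ with $m - n \geq 1$, contradicting the definition of $B$. If $\mu(B) > 0$, this pairwise disjointness would witness that $B$ is wandering, violating conservativity; hence $\mu(B) = 0$. Thus $\mu$-a.e.\ point of $E$ returns to $E$ at least once.

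To upgrade from ``at least one return'' to ``infinitely many returns'', I would consider the set $E'$ of points of $E$ whose orbit visits $E$ only finitely many times. Any $x \in E'$ has a last return time $N(x)$, and by construction $T^{N(x)}(x) \in B$, so $E' \subset \bigcup_{k \geq 0} T^{-k}(B)$. The main obstacle is here: to conclude $\mu(E') = 0$, one needs $\mu(T^{-k}(B)) = 0$ for every $k$, which requires at least non-singularity of $\mu$ (this is why Halmos's theorem is normally stated in the non-singular category, the setting in which it will later be applied to inner functions). Once this final step is in place, a.e.\ $x \in E$ has infinitely many return times $n_k \to \infty$, so $\mu$ is recurrent.
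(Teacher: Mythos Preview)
The paper does not give its own proof of this statement: Theorem~\ref{thm:halmos} is quoted from \cite{halmos} as a preliminary fact and used as a black box. Your argument is the standard Halmos proof, and both implications are handled correctly.

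One comment on the point you flag yourself. In your upgrade from ``one return'' to ``infinitely many returns'' you pass through $E' \subset \bigcup_{k \ge 0} T^{-k}(B)$ and then need $\mu(T^{-k}(B)) = 0$, which indeed calls for non-singularity. This is harmless for the paper's purposes (the boundary map of an inner function is non-singular, as noted after Theorem~\ref{thm:mu}), but the hypothesis is in fact not needed for the equivalence as stated. One can avoid it by stratifying the finite-return set: set $F_n = \{x \in E : \text{the last } k \ge 0 \text{ with } T^k(x) \in E \text{ is } k = n\}$. A short check shows that for $0 \le i < j$ the sets $T^{-i}(F_n)$ and $T^{-j}(F_n)$ are disjoint (if $T^i(x), T^j(x) \in F_n$ then the orbit of $x$ has last $E$-visit both at time $i+n$ and at time $j+n$), so each $F_n$ is itself wandering and hence $\mu(F_n) = 0$ by conservativity alone. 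Since $E' = \bigcup_{n \ge 0} F_n$, this gives $\mu(E') = 0$ without invoking non-singularity.
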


If $\mu$ is finite and invariant, then the Poincar\'e Recurrence Theorem (see e.g.~\cite{petersen-book}) asserts that $T$ is recurrent. Note that this does not extend to the case of infinite invariant measures. On the other hand, the following holds (see e.g.~\cite{aaronson-book}).

\begin{thm}\label{thm:conserv}
If $\mu$ is non-singular, then the following are equivalent:
\begin{itemize}
\item[\rm (a)] $T$ is conservative and ergodic.
\item[\rm (b)] For every measurable $E \subset X$ of positive measure $\mu$,  for $\mu$-almost every $x \in X$ there exists an infinite sequence of positive integers $n_k \to \infty$, $k = 1, 2, \ldots$, such that $T^{n_k}(x) \in E$.  
\end{itemize}

\end{thm}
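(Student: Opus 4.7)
The plan is to prove both implications directly from the definitions, using Halmos's theorem (Theorem~\ref{thm:halmos}) to identify conservativity with recurrence. The key auxiliary object is, for each measurable $E \subset X$, the ``infinitely often'' set
\[
F_E = \{x \in X : T^n(x) \in E \text{ for infinitely many } n \geq 0\} = \bigcap_{N \geq 0} \bigcup_{n \geq N} T^{-n}(E),
\]
which is measurable and satisfies $T^{-1}(F_E) = F_E$ \emph{exactly} (not just modulo null sets), since the cardinality of an infinite subset of $\mathbb{N}$ is unchanged by deleting its first element. In this notation, condition (b) is precisely the assertion that $\mu(X \setminus F_E) = 0$ whenever $\mu(E) > 0$.

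For the implication (a) $\Rightarrow$ (b), I would fix $E$ with $\mu(E) > 0$ and consider $F_E$. By Theorem~\ref{thm:halmos}, conservativity of $T$ is equivalent to recurrence, so $\mu$-almost every $x \in E$ returns to $E$ infinitely often, which means $E \setminus F_E$ is null. Hence $\mu(F_E) \geq \mu(E) > 0$, and since $F_E$ is exactly $T$-invariant, ergodicity forces $\mu(X \setminus F_E) = 0$, which is (b).

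For (b) $\Rightarrow$ (a), recurrence follows immediately from (b) by specialising the ``$\mu$-almost every $x \in X$'' statement to the subset $E$, so Halmos's theorem delivers conservativity. For ergodicity, I would take any measurable $E$ with $T^{-1}(E) = E$ and $\mu(E) > 0$; induction gives $T^{-n}(E) = E$ for all $n \geq 0$, whence $F_E = E$, and (b) applied to $E$ yields $\mu(X \setminus E) = \mu(X \setminus F_E) = 0$.

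Neither direction presents a genuine obstacle — the argument is essentially bookkeeping around the invariant set $F_E$. The point worth emphasising is that the exact (rather than merely almost-sure) equality $T^{-1}(F_E) = F_E$ is what allows ergodicity, stated for exactly invariant sets, to be applied directly; non-singularity of $\mu$ enters in the background to ensure the null-set manipulations above are consistent with the dynamics (in particular that $T$-preimages of null sets remain null), so that the various ``$\mu$-almost every'' statements can be combined without losing information.
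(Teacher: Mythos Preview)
The paper does not actually prove this statement: it is listed among the preliminaries with the parenthetical reference ``(see e.g.~\cite{aaronson-book})'' and no argument is given. Your proof is correct and is the standard one. The key device --- the exactly $T$-invariant set $F_E = \bigcap_{N\ge 0}\bigcup_{n\ge N} T^{-n}(E)$ --- is used properly in both directions, and the appeals to Theorem~\ref{thm:halmos} and to the definition of ergodicity are clean. One small remark: your closing sentence about where non-singularity enters is a bit imprecise; in the argument as written it is hidden inside Theorem~\ref{thm:halmos} (the implication conservative $\Rightarrow$ recurrent is where non-singularity is genuinely needed), rather than in the manipulation of $F_E$ itself, which is purely set-theoretic.
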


Recall that an inner function $h: \D \to \D$ generates a dynamical system of iterations of $h$ on $\bdd$, defined Lebesgue-almost everywhere on $\bdd$. 
We will use the following fundamental dichotomy, proved by Aaronson \cite{aaronson2} (see also \cite[Theorems~4.1 and 4.2]{doering-mane}) on the boundary behaviour of inner functions.

\begin{thm}[\cite{aaronson2}] \label{thm:ifthen}
Let $h: \D \to \D$ be an inner function. Then the following hold.
\begin{itemize}
\item[\rm (a)] If $\sum_{n=1}^\infty (1 - |h^n(z)|) < \infty$ for some $z \in \D$, then $h^n$ converges to a point $p \in\bdd$ almost everywhere on $\bdd$. 
\item[\rm (b)] If $\sum_{n=1}^\infty (1 - |h^n(z)|) = \infty$ for some $z \in \D$, then $h$ on $\bdd$ is conservative with respect to the Lebesgue measure. 
\end{itemize}
\end{thm}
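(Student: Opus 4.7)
The overall strategy is to exploit the fact that the boundary map of an inner function $h$ is non-singular with respect to Lebesgue measure $\lambda$ on $\bdd$, together with an explicit push-forward formula. Concretely, by L\"owner's lemma (extended via a M\"obius change of variables when $h(0)\ne 0$) one has, for every Borel $E\subset\bdd$ and every $n\ge 0$,
\[
\lambda\bigl(h^{-n}(E)\bigr) \;=\; \omega_{h^n(0)}(E) \;=\; \int_E P\bigl(h^n(0),\zeta\bigr)\,d\lambda(\zeta),
\]
where $P(z,\zeta)=(1-|z|^2)/|\zeta-z|^2$ is the Poisson kernel and $\omega_z$ is harmonic measure at $z$. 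Both parts of the theorem will be deduced from two-sided estimates of $P(h^n(0),\zeta)$ combined with Borel--Cantelli-type counting. Before splitting into cases, one observes that by the Schwarz--Pick lemma the ratio $(1-|h^n(z)|)/(1-|h^n(0)|)$ remains bounded above and below by constants depending only on $\varrho_\D(0,z)$, so the summability condition is independent of the base point $z$, and we may assume $z=0$. By Theorem~\ref{thm:denjoy} there is a Denjoy--Wolff point $p\in\clD$.

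\textbf{Part (a).} Assume $\sum_n (1-|h^n(0)|)<\infty$. Then $h^n(0)\to\bdd$ and, by Denjoy--Wolff, $h^n(0)\to p\in\bdd$. For any closed arc $I\subset\bdd\setminus\{p\}$ there exist $\delta>0$ and $N$ with $|\zeta - h^n(0)|\ge\delta$ for $\zeta\in I$ and $n\ge N$, hence $P(h^n(0),\zeta)\le 2(1-|h^n(0)|)/\delta^2$ on $I$. Integrating and applying the push-forward identity gives
\[
\lambda\bigl(h^{-n}(I)\bigr) \;\le\; C_I\bigl(1-|h^n(0)|\bigr),
\]
which is summable in $n$. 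The Borel--Cantelli lemma then asserts that for $\lambda$-almost every $\zeta\in\bdd$ one has $h^n(\zeta)\notin I$ for all $n$ large. Exhausting $\bdd\setminus\{p\}$ by a countable family of such arcs yields $h^n(\zeta)\to p$ for $\lambda$-almost every $\zeta\in\bdd$.

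\textbf{Part (b).} Assume $\sum_n (1-|h^n(0)|)=\infty$. If $p\in\D$, then $h(p)=p$ and the push-forward identity applied with base point $p$ shows that the boundary map preserves the finite measure $\omega_p$, which is equivalent to $\lambda$; Poincar\'e recurrence combined with Theorem~\ref{thm:halmos} yields conservativity. Assume therefore $p\in\bdd$, and suppose, aiming at a contradiction, that $h$ on $\bdd$ is not conservative. Then there exists $W\subset\bdd$ with $\lambda(W)>0$ such that the sets $h^{-n}(W)$, $n\ge 0$, are pairwise essentially disjoint. By replacing $W$ with $W\setminus\D(p,\varepsilon)$ for a small $\varepsilon>0$ we may assume $W$ is bounded away from $p$. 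A reverse Poisson estimate then supplies a constant $c>0$ with $P(h^n(0),\zeta)\ge c(1-|h^n(0)|)$ for $\zeta\in W$ and all $n$ large, whence
\[
\lambda(\bdd) \;\ge\; \sum_{n\ge 0}\lambda\bigl(h^{-n}(W)\bigr) \;=\; \sum_{n\ge 0}\int_W P(h^n(0),\zeta)\,d\lambda(\zeta) \;\ge\; c\,\lambda(W)\sum_{n\ge 0}\bigl(1-|h^n(0)|\bigr) \;=\; \infty,
\]
a contradiction. Hence $h$ is conservative.

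\textbf{Main obstacle.} The chief technical difficulty is a rigorous justification of the push-forward identity for a general inner function $h$: while classical when $h(0)=0$, its extension requires careful control of the almost-everywhere defined boundary extension, especially in the presence of singular points on $\bdd$. A secondary delicate step is extracting, from the Hopf decomposition, a positive-measure wandering set bounded away from the Denjoy--Wolff point $p$; this reduces to a straightforward exhaustion argument once non-singularity of the boundary map is in place.
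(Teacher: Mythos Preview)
The paper does not give its own proof of this statement: Theorem~\ref{thm:ifthen} appears in the Preliminaries as a background result quoted from Aaronson~\cite{aaronson2} (see also \cite{doering-mane}), and is used as a black box in Sections~\ref{sec:proofA} and~\ref{sec:proofB}. There is therefore nothing in the paper to compare your argument against.

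That said, your proof is essentially correct and is in the spirit of the standard argument. A couple of minor remarks. First, the identity $\lambda(h^{-n}(E))=\omega_{h^n(0)}(E)$ is not really an obstacle: since $h^n$ is inner, the bounded harmonic extension of $\chi_E\circ (h^n)^*$ is $u\circ h^n$, where $u$ is the bounded harmonic extension of $\chi_E$, and evaluating at $0$ gives the formula directly; no separate treatment of singular boundary points is needed. Second, in part~(b) the restriction of the wandering set $W$ away from $p$ is superfluous: for any $z\in\D$ and any $\zeta\in\bdd$ one has $|\zeta-z|\le 2$, hence $P(z,\zeta)\ge(1-|z|^2)/4\ge(1-|z|)/4$, so the lower bound used in the contradiction holds on all of $\bdd$. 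With these simplifications the argument is clean: part~(a) is Borel--Cantelli applied to $\lambda(h^{-n}(I))\le C_I(1-|h^n(0)|)$, and part~(b) is the observation that a wandering set of positive measure would force $\sum_n(1-|h^n(0)|)\le 4\,\lambda(\bdd)/\lambda(W)<\infty$.
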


In \cite{aaronson3} (see also \cite[Theorem~3.1]{doering-mane}), the following characterization of the exactness of $h$ was established.

\begin{thm}[\cite{aaronson3}]\label{thm:exact}
Let $h: \D \to \D$ be an inner function with the Denjoy--Wolff point in $\bdd$. Then $h$ on $\bdd$ is exact with respect to the Lebesgue measure if and only if $h$ is of doubly parabolic type. 
\end{thm}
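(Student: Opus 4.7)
My plan is to establish the two directions of the equivalence separately, in both cases reducing to the dichotomy of Theorem~\ref{thm:ifthen}.

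For the direction ``exact $\Rightarrow$ doubly parabolic'', I argue contrapositively. Assume $h$ is of hyperbolic or simply parabolic type. Cowen's Theorem~\ref{thm:cowen} supplies a holomorphic semiconjugacy $\psi\colon\D\to\H$ with $\psi\circ h=T\circ\psi$, where $T(\omega)=a\omega$ for some $a>1$ or $T(\omega)=\omega\pm i$. Schwarz--Pick applied to $\psi$ yields $\varrho_\D(h^n(z),z)\ge\varrho_\H(T^n\omega_0,\omega_0)$ with $\omega_0=\psi(z)$, and a direct computation in the half-plane gives the asymptotics $\varrho_\H(a^n\omega_0,\omega_0)\sim n\log a$ in the hyperbolic case and $\varrho_\H(\omega_0+in,\omega_0)\sim 2\log n$ in the simply parabolic case. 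Combined with $\varrho_\D(w,0)\asymp-\log(1-|w|)$ near $\bdd$, these yield $1-|h^n(z)|\le Ce^{-cn}$ or $C/n^2$ respectively, so $\sum_n(1-|h^n(z)|)<\infty$. Theorem~\ref{thm:ifthen}(a) then forces $h^n\to p$ almost everywhere on $\bdd$, so for any arc $I\subset\bdd$ at positive distance from $p$ the forward orbit of almost every point of $I$ leaves $I$ and never returns. Thus $h$ is not recurrent, so by Theorem~\ref{thm:halmos} not conservative; since any exact non-singular transformation of a finite measure space is conservative, $h$ cannot be exact.

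For the direction ``doubly parabolic $\Rightarrow$ exact'', the first task is to establish conservativity, which by Theorem~\ref{thm:ifthen}(b) reduces to $\sum_n(1-|h^n(z)|)=\infty$. By Theorem~\ref{thm:doubly}, $\varrho_\D(h^{n+1}(z),h^n(z))\to 0$, and Cowen's semiconjugacy is $\psi\circ h=T\circ\psi$ with $T(\omega)=\omega+1$ on $\Omega=\C$. Working on the absorbing set $V$ where $\psi$ is univalent, $\psi(V)\subset\C$ is absorbing for the integer translation, so $\psi(V)$ must contain a fundamental neighborhood of the orbit $\omega_0+n$; estimating the hyperbolic metric of $\psi(V)$ using the distance from $\omega_0+n$ to $\partial(\psi(V))$ (which grows at most linearly in $n$) gives $\varrho_\D(h^n(z),0)=O(\log n)$, whence $1-|h^n(z)|\gtrsim n^{-C}$ for some $C\le 1$ sufficient for the divergence of the series. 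Theorem~\ref{thm:ifthen}(b) then furnishes conservativity.

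The main obstacle is upgrading conservativity to exactness. The plan is to exploit the distinguishing feature of the doubly parabolic case: the target $\Omega=\C$ of the semiconjugacy carries no non-trivial hyperbolic metric, and the model map is the integer translation $\omega\mapsto\omega+1$ on $\C$. I would transfer a set $E=\bigcap_n h^{-n}(X_n)$ in the tail $\sigma$-algebra, through the boundary extension of $\psi$, to a subset of the boundary of $\psi(V)$ that is invariant under the $\Z$-action of $T$; a Lebesgue density / $0$--$1$ argument applied to this $\Z$-invariant set on the $\C$-side should then force its Lebesgue measure to be $0$ or full, yielding triviality of the tail $\sigma$-algebra. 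This is the technically delicate step, essentially Aaronson's original argument in~\cite{aaronson3}; the difficulty lies precisely in extracting ergodic-theoretic mixing from the integer translation action on $\C$ in the absence of the hyperbolic contraction available in the other two cases.
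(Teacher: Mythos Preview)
The paper does not prove Theorem~\ref{thm:exact}; it is quoted from Aaronson~\cite{aaronson3} (see also \cite[Theorem~3.1]{doering-mane}) as a tool, so there is no proof in the paper to compare against. That said, your argument has a genuine gap, and it is essentially the same gap in both directions: you assume that exactness and conservativity are tied together, explicitly invoking ``exact $\Rightarrow$ conservative'' at the end of the first direction, and making conservativity a mandatory stepping stone in the second. This link is false for boundary maps of inner functions, and a counterexample appears in the paper itself. The map in Example~\ref{ex:mane} is doubly parabolic, yet $h^n\to p$ almost everywhere on~$\bdd$; by your own non-recurrence argument together with Theorem~\ref{thm:halmos} it is \emph{not} conservative, and by the contrapositive of Theorem~\ref{thm:ifthen}(b) one even has $\sum_n(1-|h^n(z)|)<\infty$. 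Since Theorem~\ref{thm:exact} is a correct published result, this same example is exact, so ``exact $\Rightarrow$ conservative'' fails.

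Consequently, in the forward direction your estimates $1-|h^n(z)|\le C e^{-cn}$ (hyperbolic case) and $1-|h^n(z)|\le C/n^2$ (simply parabolic case) are correct and do yield $h^n\to p$ a.e.\ via Theorem~\ref{thm:ifthen}(a), but the final step ``not conservative $\Rightarrow$ not exact'' is invalid. In the backward direction the plan collapses at the outset, independently of the theorem being proved: doubly parabolic does not imply $\sum_n(1-|h^n(z)|)=\infty$, so your claimed lower bound $1-|h^n(z)|\ge c\,n^{-C}$ with $C\le 1$ cannot hold in general. Concretely, for a general absorbing domain $\psi(V)\subset\C$ for the translation $\omega\mapsto\omega+1$ there is no linear lower bound $\dist(\omega_0+n,\partial\psi(V))\ge cn$; this is precisely what distinguishes Example~\ref{ex:mane} from the maps covered by Theorem~C. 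Aaronson's proof of exactness in the doubly parabolic case does not proceed via conservativity, and the failure of exactness in the hyperbolic and simply parabolic cases must be obtained by producing a non-trivial tail set directly rather than by appealing to dissipativity.
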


If the Denjoy--Wolff point of an inner function $h$ is in $\D$, then $h$ on $\bdd$ preserves an absolutely continuous finite (harmonic) measure, which is exact (see e.g.~\cite{doering-mane}). Suppose now $h$ has the Denjoy--Wolff point $p\in\bdd$. Then $h$ has no longer an absolutely continuous finite invariant measure. However, in the parabolic case it preserves a $\sigma$-finite absolutely continuous measure. More precisely, define a measure $\mu_p$ on $\bdd$ by 
\begin{equation}\label{eq:mu}
\mu_p(E) = \int_E \frac{d\lambda(w)}{|w-p|^2}
\end{equation}
for Lebesgue-measurable sets $E \subset \bdd$. 
A short calculation shows that $\mu_p$ is equal (up to a multiplication by a constant) to the image of the Lebesgue measure on $\R$ under a M\"obius transformation $M$ mapping conformally the upper half-plane onto $\D$ with $M(\infty) = p$. 
It is obvious that the Lebesgue measure $\lambda$ on $\bdd$ and the measure $\mu_p$ are mutually absolutely continuous, i.e.
\begin{equation}\label{eq:zero}
\lambda(E) = 0 \Longleftrightarrow\mu_p(E) = 0
\end{equation}
for Lebesgue-measurable sets $E \subset \bdd$.

It is known (see e.g.~\cite{pommer}) that an inner function $h$ with the Denjoy--Wolff point $p \in \bdd$ has an angular derivative equal to some $q \in (0,1]$, where the case $q < 1$ corresponds to hyperbolic type of $h$, while $q = 1$ corresponds to (simply or doubly) parabolic type. We have
\[
q = \lim_{z \to p} \frac{1 - |h(z)|}{1 - |z|}
\]
in the sense of angular limit and 
\[
q = \lim_{n\to\infty}(1 - |h^n(z)|)^{\frac{1}{n}} \qquad \text{for every } z \in \D.
\]
The following result asserts in particular that in the parabolic case the measure $\mu_p$ is invariant.

\begin{thm}[{\cite[Theorem 4.4]{doering-mane}}]\label{thm:mu} Let $h: \D \to \D$ be an inner function with the Denjoy--Wolff point $p \in \bdd$. Then 
\[
\mu_p(h^{-1}(E)) = q\mu_p(E)
\]
for every Lebesgue-measurable set $E \subset \bdd$. 
\end{thm}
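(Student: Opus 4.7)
The plan is to conjugate $h$ to the upper half-plane $\HH$ via a M\"obius transformation $M:\HH\to\D$ with $M(\infty)=p$. Setting $\tilde h = M^{-1}\circ h\circ M$, we obtain an inner function $\tilde h:\HH\to\HH$ whose Denjoy--Wolff point is $\infty$, and a direct computation shows that the pullback of $\mu_p$ under $M$ is a constant multiple of Lebesgue measure on $\R$. Hence the desired identity $\mu_p(h^{-1}(E))=q\,\mu_p(E)$ reduces to proving
$$|\tilde h^{-1}(\tilde E)| = q\,|\tilde E|$$
for every Lebesgue-measurable $\tilde E\subset\R$, where $|\cdot|$ denotes one-dimensional Lebesgue measure. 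In this picture, the Julia--Carath\'eodory theorem at $\infty$ translates the angular derivative $q$ into the nontangential limits $\imag\tilde h(iy)/y\to 1/q$ and $\re\tilde h(iy)/y\to 0$ as $y\to\infty$.

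The main tool is the Poisson representation on $\HH$. For bounded $\tilde E\subset\R$, let $u:\HH\to[0,1]$ denote the Poisson integral of $\chi_{\tilde E}$. Because $\tilde h$ is inner on $\HH$, its nontangential boundary values lie in $\R$ almost everywhere, so $u\circ\tilde h$ is a bounded harmonic function on $\HH$ whose nontangential boundary values equal $\chi_{\tilde h^{-1}(\tilde E)}$ almost everywhere. By uniqueness of bounded harmonic extensions, $u\circ\tilde h$ is precisely the Poisson integral of $\chi_{\tilde h^{-1}(\tilde E)}$. Evaluating both representations along the imaginary axis and comparing the resulting asymptotics as $y\to\infty$ will furnish the identity.

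On one hand, a straightforward dominated convergence argument gives $y\cdot u(iy)\to|\tilde E|/\pi$ and, similarly, $y\cdot(u\circ\tilde h)(iy)\to|\tilde h^{-1}(\tilde E)|/\pi$ (the latter possibly $+\infty$ a priori). On the other hand, writing $\tilde h(iy)=x(y)+iy'(y)$, the Julia--Carath\'eodory limits give $y'(y)\sim y/q$ and $x(y)=o(y)$; the integrand
$$\frac{y\cdot y'(y)}{(x(y)-t)^2+y'(y)^2}$$
appearing in the direct evaluation of $y\cdot u(\tilde h(iy))$ is then dominated uniformly by $y/y'(y)\to q$ and converges pointwise in $t\in\tilde E$ to $q$, so another application of dominated convergence yields $y\cdot u(\tilde h(iy))\to q\,|\tilde E|/\pi$. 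Equating the two expressions forces $|\tilde h^{-1}(\tilde E)|=q\,|\tilde E|$ for bounded $\tilde E$, and the general case follows by monotone convergence. The principal technical obstacle is establishing enough quantitative control on $\tilde h$ near $\infty$ to invoke dominated convergence; this uses Julia's lemma (which provides the a priori bound $\imag\tilde h(z)/\imag z\geq 1/q$ throughout $\HH$) together with the Julia--Carath\'eodory theorem. Once these estimates are in place, the rest of the argument is formal.
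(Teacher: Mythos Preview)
The paper does not contain a proof of this statement; it is quoted as \cite[Theorem 4.4]{doering-mane} and used as a black box in the preliminaries, so there is no in-paper argument to compare against.

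That said, your proposal is sound and is essentially the standard route. The paper itself already records the key reduction you use, namely that $\mu_p$ is (up to a constant) the push-forward of Lebesgue measure on $\R$ under a M\"obius map $M:\HH\to\D$ with $M(\infty)=p$; from there your Poisson-extension argument works. A couple of small points worth tightening: first, the nontangential limit $\tilde h(iy)/(iy)\to 1/q$ (which gives simultaneously $\imag\tilde h(iy)/y\to 1/q$ and $\re\tilde h(iy)/y\to 0$) is exactly the half-plane form of the Julia--Carath\'eodory theorem, and the uniform lower bound $\imag\tilde h(z)\ge q^{-1}\imag z$ is Julia's lemma, so your domination constant $y/y'(y)\le q$ is legitimate for all $y>0$, not just asymptotically. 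Second, for the limit $y\cdot(u\circ\tilde h)(iy)\to|\tilde h^{-1}(\tilde E)|/\pi$ you should note that Fatou's lemma first gives $\liminf\ge|\tilde h^{-1}(\tilde E)|/\pi$, which combined with the other computation forces $|\tilde h^{-1}(\tilde E)|<\infty$; only then can you invoke dominated convergence to upgrade to an actual limit. With these clarifications the argument is complete.
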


%%%%%%%%%%%%%%%%%%%%%%%%%%%%%%
%%%%%%%%%%%%%%%%%%%%%%%%%%%%%%
\section{Proof of Theorem~A} \label{sec:proofA}

Throughout this section we assume that $f: \C \to \clC$ is a meromorphic map with a simply connected invariant Baker domain $U$, such that $f|_U$ is of hyperbolic or simply parabolic type, and the associated inner function 
\[
g = \varphi^{-1}\circ f \circ \varphi,
\]
where $\varphi: \D \to U$ is a Riemann map, has the non-singular Denjoy--Wolff point $p \in \bdd$. As mentioned in Section~\ref{sec:prelim}, this includes the case when $f$ has finite degree on $U$. 

The proof of Theorem~A extends the arguments used by Rippon and Stallard for univalent Baker domains. As in \cite{ripponstallard-escaping}, we use the Pfl\"uger-type estimate on the boundary behaviour of conformal maps included in Theorem~\ref{thm:Pfluger}. The following lemma makes a crucial step in the proof of Theorem A.

\begin{lem}\label{lem:main} The following statements hold.
\begin{itemize}
\item[\rm(a)] There exist an arbitrarily small neighbourhood $W$ of the Denjoy--Wolff point $p$ of $g$ and a point $w_0 \in \bdd \cap W \setminus \{p\}$, such that $g^n(w_0) \in W$ for every $n \ge 0$. 
\item[\rm(b)] Let $J_n \subset \bdd$ be the closed arc connecting $g^n(w_0)$ with $g^{n+1}(w_0)$ in $W$ and let
\[
B_n = B_n(w_0, M) = \{z \in J_n: |\varphi(z)| < M\}
\]
$($in the sense of radial limits of $\varphi)$ for $M > 0$. Then 
\[
\sum_{n=0}^\infty \frac{\lambda(B_n)}{a^n} < \infty,
\]
where $a = g'(p)$.
\end{itemize}
\end{lem}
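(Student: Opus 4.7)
For part (a), my plan is to exploit the non-singularity of the Denjoy--Wolff point $p$. Since $p$ is non-singular, $g$ extends holomorphically to a full neighbourhood of $p$ in $\C$, and because the radial limits of $g$ on $\bdd$ have modulus one almost everywhere, this extension must send a neighbourhood of $p$ in $\bdd$ into $\bdd$. Thus $g$ is a local analytic self-map of $\bdd$ near $p$, fixing $p$. In the hyperbolic case, Theorem~\ref{thm:traj}(a) gives $g'(p) = a \in (0,1)$, so $g$ is a strict contraction on every sufficiently small disc $W$ centred at $p$, and any $w_0 \in \bdd \cap W \setminus \{p\}$ satisfies $g^n(w_0) \in W$ for all $n \ge 0$. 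In the simply parabolic case, Theorem~\ref{thm:traj}(b) makes $p$ a parabolic fixed point of multiplicity two; the Leau--Fatou flower theorem then provides an attracting petal whose intersection with $\bdd$ is one of the two sides of $p$, and choosing $w_0$ on that side forces $g^n(w_0) \to p$ while remaining in $W$.

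The simply parabolic case of part (b) is immediate. There $a = g'(p) = 1$, so the claim reduces to $\sum_n \lambda(B_n) < \infty$. The arcs $J_n$ are consecutive, overlap only at their common endpoints, and satisfy $g^n(w_0) \to p$, so their union is contained in a single sub-arc of $\bdd$ from $w_0$ to $p$; hence $\sum_n \lambda(B_n) \le \sum_n \lambda(J_n) \le 2\pi$.

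The hyperbolic case of part (b) is the main obstacle, and my plan is to combine two ingredients. First, since $g$ is univalent and analytic in a fixed neighbourhood of $p$ with $g'(p) = a \in (0,1)$, Koenigs linearisation together with the Koebe distortion theorem provides local inverse branches $\tau_n = (g^n)^{-1}$ fixing $p$, defined on a common neighbourhood of $p$ containing each $J_n$, with $|\tau_n'| \asymp a^{-n}$ and bounded distortion on $J_n$. By part (a) the forward orbit $g^n(w_0)$ stays in $W$, so $\tau_n(J_n) = J_0$, and setting $\widetilde B_n = \tau_n(B_n) \subset J_0$ yields $\lambda(B_n) \asymp a^n \lambda(\widetilde B_n)$. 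The claim thus reduces to $\sum_n \lambda(\widetilde B_n) < \infty$. Second, to control $\lambda(\widetilde B_n)$ I plan to apply Theorem~\ref{thm:Pfluger}: for $w \in \widetilde B_n$ we have $f^n(\varphi(w)) = \varphi(g^n(w)) \in \overline{\D(0,M)}$, while the orbit $f^n(\varphi(0))$ of the base point escapes to infinity in $U$. Taking $\Phi$ to be $\varphi$ precomposed with a Möbius automorphism of $\D$ sending $0$ to $g^n(0)$, and choosing $V \subset U$ to separate $\Phi(0) = f^n(\varphi(0))$ from $\overline{\D(0,M)}$, every curve in $\D$ from $0$ to the transported copy of $\widetilde B_n$ is forced to have $\Phi$-image traversing $V$ along a long arc, which by Pflüger produces an exponentially small bound on $\lambda(\widetilde B_n)$.

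The hardest step will be exactly this Pflüger application. A naive choice of $V$ as a Euclidean annulus $\{M < |w| < |f^n(\varphi(0))|/2\} \cap U$ yields a Pflüger exponent of order one, hence only $\lambda(\widetilde B_n) = O(1)$, which is not summable. To obtain the required decay one must exploit the true escape rate of $f^n(\varphi(0))$ and the finer geometry of $U$, following the scheme of Rippon and Stallard \cite{ripponstallard-escaping} for univalent Baker domains, with the Koebe-controlled distortion of $g^n$ at the non-singular Denjoy--Wolff point playing the role that the global Möbius structure of $g$ plays in their univalent setting. The non-singularity hypothesis on $p$ is indispensable at precisely this step: it both produces the inverse branches $\tau_n$ and supplies the distortion estimates that let us reduce the original series to $\sum_n \lambda(\widetilde B_n)$.
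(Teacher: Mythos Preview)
Your treatment of part~(a) is correct and matches the paper's approach.

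For the simply parabolic case of part~(b), your argument is correct and in fact simpler than the paper's: with $a=1$ the claim is just $\sum_n\lambda(B_n)<\infty$, and since the arcs $J_n$ tile the arc from $w_0$ to $p$ this is immediate from $\sum_n\lambda(J_n)<2\pi$. The paper instead runs the full Pfl\"uger machinery in this case as well, using Fatou coordinates in place of the Koenigs map; your observation shortcuts that.

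In the hyperbolic case of part~(b), however, there is a genuine gap. Your proposed conformal map $\Phi=\varphi\circ m_n$, with $m_n$ a M\"obius automorphism of $\D$ sending $0$ to $g^n(0)$, does not set up the Pfl\"uger estimate correctly: for $\zeta\in\widetilde B_n=\tau_n(B_n)\subset J_0$, the radial limit of $\Phi$ at $m_n^{-1}(\zeta)$ is $\varphi(\zeta)$, just some point of $\partial U$ with no reason to lie in $\overline{\D(0,M)}$. What lies in $\overline{\D(0,M)}$ is $\varphi(g^n(\zeta))$, not $\varphi(\zeta)$; your $\Phi$ never sees the boundary iterate $g^n$. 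The correct choice builds the dynamics into $\Phi$: the paper takes a Riemann map $\psi_0\colon\D\to S_0$ onto a fixed Koenigs half-annulus containing $J_0$, and sets $\Phi_n(z)=1/(\varphi\circ g^n\circ\psi_0)(z)$, which is univalent because $g^n$ is univalent on $S_0$.

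You also miss the decisive idea that makes the Pfl\"uger bound summable. It is \emph{not} the escape rate of $f^n(\varphi(0))$, as you suggest in your last paragraph, but the inversion $w\mapsto 1/w$: after inverting, the sets $V_n=\{1/w:w\in\varphi(g^n(S_0))\cap A(M)\}$ all lie in the fixed annulus $\{1/(2M)<|z|<1/M\}$, and since $g^n(S_0)$ overlaps only $g^{n\pm1}(S_0)$ one gets $\sum_n\area V_n<\infty$. Theorem~\ref{thm:Pfluger} then yields $\lambda(E_n)\le c_1\exp(-c_2/\area V_n)\le c_1c_3\,\area V_n$, which is summable regardless of how slowly the orbit escapes. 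Your appeal to the ``true escape rate'' and to \cite{ripponstallard-escaping} without isolating this mechanism does not close the gap.
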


\begin{rem}\label{rem:cap} Recall that by Beurling's Theorem (see \cite{Beu40}), the map $\varphi$ extends continuously to $\bdd$ (in the sense of radial limits) up to a set of logarithmic capacity $0$. Repeating the arguments used in the proof of \cite[Theorem~3.1]{ripponstallard-escaping}, one can show that in fact
\[
\sum_{n=0}^\infty \frac{1}{\ln(a^n/\capacity B_n)} < \infty,
\]
where $\capacity$ denotes logarithmic capacity
$($with the convention $\frac{1}{\ln(a^n/\capacity B_n)} = 0$ if $\capacity B_n = 0)$. 
\end{rem}

\begin{proof}[Proof of Lemma~\rm\ref{lem:main}]

By assumption, $g$ extends holomorphically to a neighbourhood of $p$. As $g^n \to p$ on $\D$, by continuity we have $g(p) = p$, and since $g$ is an inner function, $p$ is an attracting or parabolic point of $g$ with $a = g'(p) \in (0,1)$ or $a = 1$. In fact, these two possibilities correspond to the cases when $f|_U$ is, respectively, hyperbolic or simply parabolic (see Theorem~\ref{thm:traj}). The proof of the lemma splits into two parts dealing with these cases.

\medskip
\noindent\emph{Case $1$$:$ $f|_U$ is hyperbolic}
\medskip

In this case the Denjoy--Wolff point $p$ of $g$ is an attracting fixed point of $g$ in $\bdd$ with $a = g'(p) \in (0,1)$. Let $\Psi$ be a conformal map from a neighbourhood of $p$ to a neighbourhood of $0$ conjugating $g$ to $z \mapsto az$, i.e. $\Psi(p)=0$ and 
\begin{equation}\label{eq:conj}
\Psi(g(z)) = a\Psi(z)
\end{equation}
for $z$ near $p$. Taking $W=\Psi^{-1}\left(\D(0,\varepsilon)\right)$ for $\varepsilon>0$ small enough, we have $\overline{g(W)}\subset W$. In particular, $g^n$ is defined in $W$ for all $n\geq 0$ and for every $w_0 \in \bdd \cap W \setminus \{p\}$, we have $g^n(w_0) \in W$ for $n \ge 0$. 
For further purposes, we choose the point $w_0$ such that $a^{-1} |\Psi(w_0)| \subset \D(0,\varepsilon)$. This ends the proof of the statement~(a).

Now we prove the statement~(b). By definition, $J_n \subset \bdd$ is the closed arc connecting $g^n(w_0)$ and $g^{n+1}(w_0)$ in $W$. Thus, by construction, $\bigcup_{n\ge 0}J_n \subset W$ and $g^n \to p$ on $\bigcup_{n\ge 0}J_n$. 

\begin{figure}[hbt!]
\centering
\def\svgwidth{0.95\textwidth}
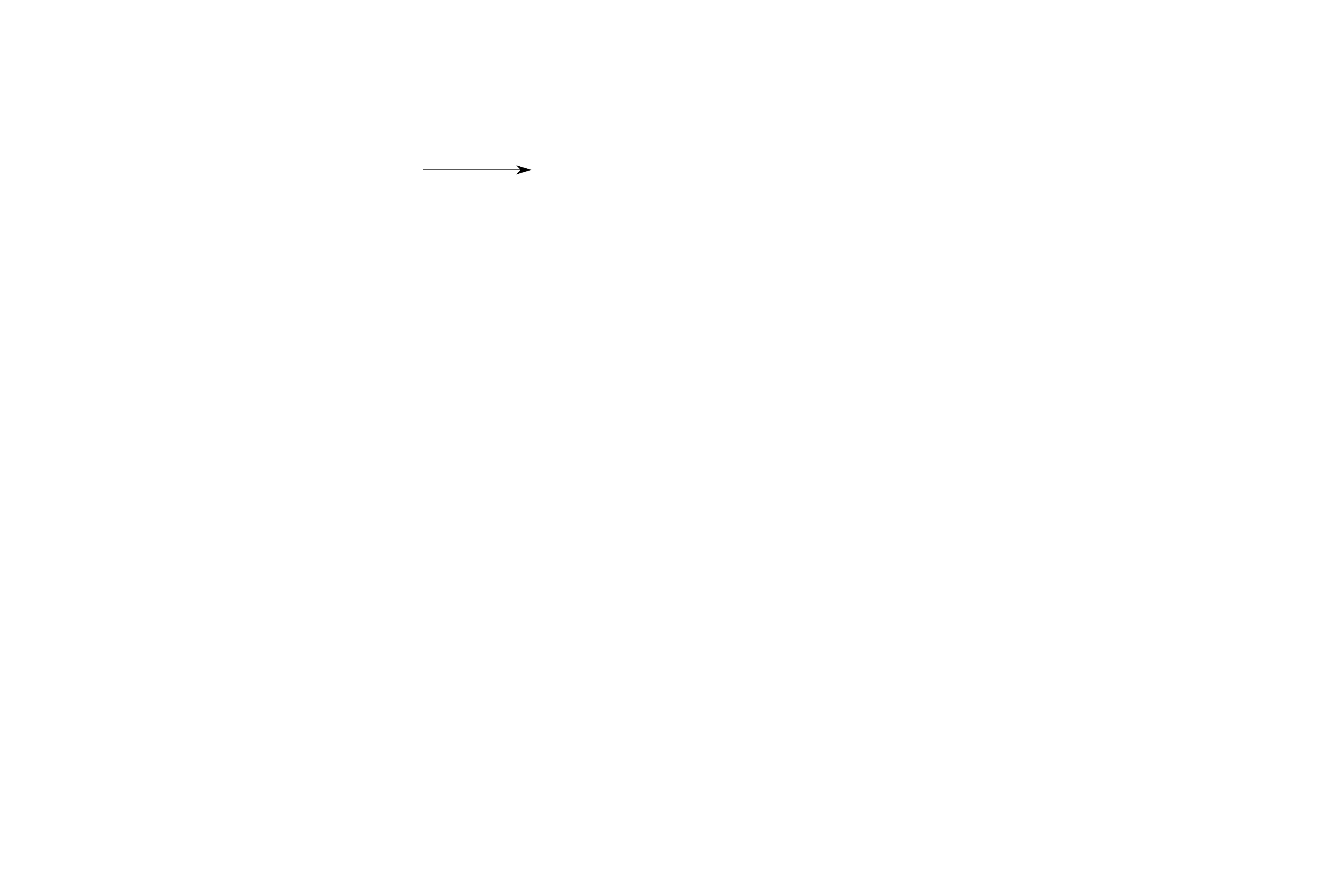
\caption{Sketch of the proof of Lemma~\ref{lem:main}.} 
\label{fig:One_Two_Accesses}
\end{figure}

Using the map $\Psi$, we define domains $S_n \subset \D$, $n\geq 0$, as 
\[
S_n = \Psi^{-1}(\{z \in \C: a^{n+2} |\Psi(w_0)| < |z| < a^{n-1} |\Psi(w_0)|\}) \cap \D.
\]
Since $a^{-1} |\Psi(w_0)| \subset \D(0,\varepsilon)$ and $\Psi$ is conformal, for sufficiently small $\varepsilon$ the domains $S_n$ are non-empty, open and simply connected in $\D$, with a Jordan boundary consisting of four analytic curves, one of them strictly containing $J_n$. Note that for $n \ge 1$, the domain $S_n$ intersects only with $S_{n-1}$ and $S_{n+1}$.

Choose a point $z_0 \in S_0$ and let $z_n = g^n(z_0) \in S_n$. Define 
\[
\psi_0: \D \to S_0
\]
to be a Riemann map with $\psi_0(0) = z_0$ and let
\[
\psi_n(z) = \Psi^{-1}(a^n(\Psi(\psi_0(z)))), \qquad z \in \D.
\]
By the definition of $S_n$, the map $\psi_n$ is a Riemann map from $\D$ onto $S_n$ with $\psi_n(0) = z_n$. Note that since $J_n$ is strictly contained in an open circle arc in $\bd S_n$, the map $\psi_n$ extends analytically (by the Schwarz reflection) to a neighbourhood of each point of $\psi_n^{-1}\left(J_n\right)$. 

Fix $M>0$ and for large $n\geq 0$ define the following three sets:
\[
A(M) = \{z \in \C: M < |z| < 2M\}, \ \  V_n = \{1/z: z \in \varphi(S_n) \cap A(M)\},  \ \   
E_n = \psi_n^{-1}(B_n \setminus \A(\varphi)),
\]
where $\A(\varphi)$ is the set of ambiguous points of $\varphi$ in $\bdd$ (see Definition~\ref{def:amb}). Note that $V_n$ is contained in the annulus $\{1/(2M) < |z| < 1/M\}$.

Now we show
\begin{equation}\label{eq:series}
\sum_{n=n_0}^\infty \lambda(E_n) < \infty
\end{equation}
for some $n_0$. 
First, note that if the set $V_n$ is empty, then $\lambda(E_n) = 0$. To see this, note that if $V_n = \emptyset$, then $\varphi(S_n) \subset \overline{\D(0, M)} \cup (\C \setminus \D(0, 2M))$. Since $\varphi(z_n) \in \varphi(S_n)$ and $\varphi(z_n) \to \infty$ as $n \to \infty$, we can assume $|\varphi(z_n)| > 2M$. As $\varphi(S_n)$ is connected, this implies $\varphi(S_n) \subset \C \setminus \D(0, 2M)$, which gives $B_n = E_n = \emptyset$, in particular $\lambda(E_n)=0$. Hence, we can assume that $V_n$ is not empty for large $n$. 

Since $U \neq \C$, by a conformal change of coordinates, we may assume $0 \notin U$, in particular $0 \notin \varphi(\psi_n(\D))$ for $n \ge 0$. 
Consider the mapping
\[
\Phi_n(z) = \frac{1}{\varphi(\psi_n(z))}, \qquad z \in \D. 
\]
Then $\Phi_n$ is a conformal map from $\D$ into $\C$ and $V_n$ is an open set in $\Phi_n(\D)$. We will apply Theorem~\ref{thm:Pfluger} to $\Phi=\Phi_n$, $V=V_n$, $E=E_n$. To check the assumption~(a) of this theorem, note that by the One-quarter Koebe Theorem, 
\[
\Phi_n(\D) \supset \D(\Phi_n(0), |\Phi_n'(0)|/4).
\]
Note also that for $n$ large enough we may assume $|\Phi_n(0)|=1/|\varphi(z_n)| <1/2M$ since, in fact, we have $1/|\varphi(z_n)| \to 0$ as $n \to \infty$.

As $z_0 \notin S_n$ for $n > 2$, we have $1/\varphi(z_0) \notin \Phi_n(\D)$ for large $n$, which implies that
\begin{equation} \label{eq:koebe}
|\Phi_n'(0)| < 4\left|\frac{1}{\varphi(z_0)} - \Phi_n(0)\right| < \frac{5}{|\varphi(z_0)|}.
\end{equation}
On the other hand, $\emptyset \neq V_n \subset \{z \in\C: |z| \ge 1/(2M)\}$, so
\[
\dist(\Phi_n(0), V_n) \ge \frac 1 {2M} - |\Phi_n(0)| > \frac 1 {3M} \ge \frac{|\varphi(z_0)|}{15M}|\Phi_n'(0)|,
\]
where the last inequality follows from  (\ref{eq:koebe}).
Thus the assumption~(a) of Theorem~\ref{thm:Pfluger} is satisfied with $\alpha = |\varphi(z_0)|/(15M)$.

To check the assumption~(b) of Theorem~\ref{thm:Pfluger}, take
a curve $\gamma \subset \D$ connecting $0$ to a point $w \in E_n$. 
Note that by the definitions of $B_n$ and $E_n$, the radial limit of $\Phi_n$ at $w$ exists and has modulus larger than $1/M$. Moreover, $w$ is not an ambiguous point of $\Phi_n$, so the limit set of $\Phi_n(\gamma)$ contains the radial limit of $\Phi_n$ at $w$. Hence, there is a sequence of points  $w_k \in \gamma$ converging to $w$, such that 
$|\Phi_n(w_k)| > 1/M$.
Since we know that $|\Phi_n(0)| < 1/(2M)$,  it follows that the curve $\Phi_n(\gamma)$ must joint two components of the complement of the annulus $\{1/(2M) < |z| < 2M\}$, which implies $\ell(\Phi_n(\gamma) \cap V_n) \ge 1/(2M)$. Hence, the assumption~(b) of Theorem~\ref{thm:Pfluger} is satisfied with $\beta = 1/(2M)$.

Now Theorem~\ref{thm:Pfluger} applied to $\Phi_n, V_n$ and $E_n$ for $n$ large enough gives 
\begin{equation}\label{eq:area}
\lambda(E_n) = \ell(E_n) < c_1 e^{-\frac{c_2}{\area V_n}}
\end{equation}
for some $c_1, c_2 > 0$ independent of $n$. Since by definition, $V_n \subset \D(0, 1/M)$ and $V_n$ can intersect only with $V_{n-1}$ and $V_{n+1}$, we have
\[
\sum_{n = 0}^\infty \area V_n \le \frac{3\pi}{M^2},
\]
in particular $\area V_n \to 0$ as $n \to \infty$. Hence, $e^{-\frac{c_2}{\area V_n}} < c_3 \area V_n$ for some $c_3 > 0$ independent of $n$, so \eqref{eq:area} gives
\[
\sum_{n = n_0}^\infty \lambda(E_n) < c_1c_3 \sum_{n = n_0}^\infty\area V_n < \infty
\]
for some $n_0$, which ends the proof of \eqref{eq:series}.

Since $\psi_0$ is holomorphic in a neighbourhood of $\psi_n^{-1}\left(J_0\right)$, it is bi-Lipschitz on $\psi_0^{-1}\left(J_0\right)$. Similarly, $\Psi$ is bi-Lipschitz on $W$. Hence, by the definition of $\psi_n$, for $n$ large enough we have
\begin{equation}\label{eq:lambda}
\lambda(B_n \setminus \A(\varphi)) = \lambda(\psi_n(E_n)) \le c_4 a^n \lambda(E_n)
\end{equation}
for some $c_4 > 0$ independent of $n$. 
Moreover, the set $\A(\varphi)$ of ambiguous points of $\varphi$ is at most countable (see Theorem~\ref{thm:amb}), so 
\[
\lambda(B_n) = \lambda(B_n \setminus \A(\varphi)).
\]
This together with \eqref{eq:lambda} gives 
\[
\frac{\lambda(B_n)}{a^n} \le c_4 \lambda(E_n) 
\]
so by  \eqref{eq:series},
\[
\sum_{n=0}^\infty \frac{\lambda(B_n)}{a^n} < \infty,
\]
which ends the proof in case~1.

\bigskip

\noindent
\noindent\emph{Case $2$$:$ $f|_U$ is simply parabolic}
\medskip

In this case the Denjoy--Wolff point $p$ of $g$ is a parabolic fixed point of $g$ in $\bdd$ with $a = g'(p) = 1$, of multiplicity $2$ (see Theorem~\ref{thm:traj}). By the local analysis of $g$ near such point (see e.g.~\cite{carlesongamelin}) and the fact that $g$ preserves $\bdd$ near $p$, there is an open arc $J \subset \bdd$ containing $p$, such that $J \setminus \{p\} = J^+ \cup J^-$, $g(J^-) \subset J^-$ and $g^n \to p$ on $J^-$, while $g(J^+) \supset J^+$ and points of $J^+$ escape from $J^+$ under iteration of $g$. Moreover, there is a conformal map $\Psi$ (Fatou coordinates) defined on an open region containing $J^-$, which conjugates $g$ to $z \mapsto z + 1$, i.e. 
\begin{equation}\label{eq:conj2}
\Psi(g(z)) = \Psi(z) + 1
\end{equation}
for $z$ in the domain of definition of $\Psi$. For a precise definition and properties of the map $\Psi$ see e.g.~\cite{carlesongamelin,milnor}. In particular, any neighbourhood $W$ of $p$ contains the set $\Psi^{-1}(\{\Re(z) > R\})$ for $R\in \mathbb R^+$ large enough and $J^- \cap \Psi^{-1}(\{\Re(z) > R\}) \neq \emptyset$. Hence, we can choose $w_0 \in J^- \cap \Psi^{-1}(\{\Re(z) > R\}) \subset W$, such that $\Re \left(\Psi(w_0)\right) > R + 1$. By \eqref{eq:conj2}, we have $g(\Psi^{-1}(\{\Re(z) > R\})\subset \Psi^{-1}(\{\Re(z) > R\})$, so $g^n(w_0) \in W$ for $n\ge 0$. Moreover, if $J_n \subset \bdd$ is the closed arc connecting $g^n(w_0)$ and $g^{n+1}(w_0)$ in $W$, then $\bigcup_{n\ge 0}J_n \subset J^-$, $\Psi$ is defined on $\bigcup_{n\ge 0}J_n$ and $g^n \to p$ on $\bigcup_{n\ge 0}J_n$. Moreover, $\Psi$ is bi-Lipschitz on $\bigcup_{n\ge 0}J_n$ (see e.g.~\cite{carlesongamelin,milnor}). 

For $n \ge 0$ let
\[
S_n = \Psi^{-1}(Q_n) \cap \D,
\]
where
\begin{multline*}
Q_n = \{z \in \C: \Re(z) \in \left(\Re(\Psi(w_0)) + n - 1, \Re(\Psi(w_0)) + n + 2\right),\; \\
\Im(z) \in \left(\Im(\Psi(w_0)) - 1, \Im(\Psi(w_0)) + 1\right)\}.
\end{multline*}

Then $S_n$ is a simply connected region in $\D$, with a Jordan boundary consisting of four analytic curves, one of them strictly containing $J_n$. Like previously, choose a point $z_0 \in S_0$, let $z_n = g^n(z_0) \in S_n$ and
define
\[
\psi_0: \D \to S_0
\]
to be a Riemann map such that $\psi_0(0) = z_0$. Set
\[
\psi_n(z) = \Psi^{-1}(\Psi(\psi_0(z))+n), \qquad z \in \D.
\]
Then $\psi_n$ is a Riemann map from $\D$ onto $S_n$ such that $\psi_n(0) = z_n$. Now we can proceed with the rest of the proof in the same way as in the hyperbolic case.
\end{proof}

\begin{lem}\label{lem:local}
The following statements hold.
\begin{itemize}
\item[\rm a)] There exists an open arc $I \subset \bdd$, with $p\in \overline{I}$, such that 
\[
\bigcup_{n = 0}^\infty g^{-n}(I) \cup \{p\} = \{z \in \bdd: g^n(z) \to p \text{ as }n \to \infty\}.
\]
\item[\rm (b)] $f^n(\varphi(z)) \to \infty$ as $n \to \infty$ for Lebesgue-almost all points $z \in I$.
\end{itemize}

\end{lem}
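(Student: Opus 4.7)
Part (a). The plan is to choose $I$ from the local dynamics of $g$ near $p$, exploiting the hypothesis that $g$ extends holomorphically to a neighborhood of $p$. In the hyperbolic case, $p$ is an attracting fixed point with multiplier $a = g'(p) \in (0,1)$ by Theorem~\ref{thm:traj}(a), so local linearization produces an open arc $I \subset \bdd$ containing $p$ with $g(I) \subset I$ and $g^n \to p$ uniformly on $\overline{I}$. In the simply parabolic case, the Fatou-coordinate analysis used in the proof of Lemma~\ref{lem:main} produces an attracting one-sided open arc $I = J^-$ with $p \in \overline{I} \setminus I$, $g(I) \subset I$ and $g^n|_I \to p$. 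In both situations the inclusion $\bigcup_n g^{-n}(I) \cup \{p\} \subset \{z \in \bdd : g^n(z) \to p\}$ is immediate. Conversely, if $g^n(z) \to p$ and $z \neq p$, then because $I$ is open and adjacent to $p$ on its unique attracting side, $g^n(z) \in I$ for all sufficiently large $n$, so $z \in g^{-n}(I)$.

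For part~(b) the plan is a Borel--Cantelli argument driven by Lemma~\ref{lem:main}(b). The map $g$ is non-singular on $\bdd$ with respect to $\lambda$, since the measure $\mu_p$ from \eqref{eq:mu} is Lebesgue-equivalent and satisfies $\mu_p(g^{-1}(E)) = q\, \mu_p(E)$ with $q>0$ by Theorem~\ref{thm:mu}. Hence, since the union $\bigcup_m J_m$ of the tiles from Lemma~\ref{lem:main} together with its iterated $g$-preimages exhausts $I$ modulo a Lebesgue null set, it suffices to prove the claim for almost every $z$ in a fixed tile $J_{m_0}$. On such a tile $g^n(z) \in J_{m_0+n}$, and the conjugacy $\Psi$ from Lemma~\ref{lem:main} (holomorphic linearization in the hyperbolic case, Fatou coordinate in the simply parabolic case) sends $g^n|_{J_{m_0}}$ to multiplication by $a^n$ or to translation by $n$, with $\Psi$ bi-Lipschitz on $\bigcup_m J_m$ in both cases. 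A bounded distortion estimate then yields, for every $M > 0$,
\[
\lambda\bigl(\{z \in J_{m_0} : |\varphi(g^n(z))| < M\}\bigr) = \lambda\bigl(J_{m_0} \cap g^{-n}(B_{m_0+n})\bigr) \le C\, a^{-n}\, \lambda(B_{m_0+n}),
\]
where $B_k = B_k(w_0, M)$ and the set identity holds up to the countable sets of ambiguous points of $\varphi$ and of points at which its radial extension fails.

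Summing over $n$ and applying Lemma~\ref{lem:main}(b) gives
\[
\sum_{n=0}^\infty \lambda\bigl(\{z \in J_{m_0} : |\varphi(g^n(z))| < M\}\bigr) \le C\, a^{m_0} \sum_{k=0}^\infty \frac{\lambda(B_k)}{a^k} < \infty,
\]
so by the Borel--Cantelli lemma almost every $z \in J_{m_0}$ satisfies $|\varphi(g^n(z))| \ge M$ for all but finitely many $n$. Intersecting the resulting full-measure sets along a sequence $M_j \to \infty$ yields $\varphi(g^n(z)) \to \infty$ for almost every $z \in J_{m_0}$, and the reduction transfers this to almost every $z \in I$. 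Since $\varphi \circ g = f \circ \varphi$ extends to $\bdd$ via radial limits off a null set, this says $f^n(\varphi(z)) \to \infty$ for almost every $z \in I$, completing part~(b). The main technical obstacle I foresee is the uniform distortion bound for $g^n|_{J_{m_0}}$ in $m_0$ and $n$: in the hyperbolic case it follows from Koebe-type control for the linearizing $\Psi$, while in the simply parabolic case it relies on the global bi-Lipschitz property of the Fatou coordinate on $\bigcup_n J_n$ recorded in Lemma~\ref{lem:main}, which is delicate because those tiles accumulate at $p$.
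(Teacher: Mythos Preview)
Your proposal is correct and follows essentially the same route as the paper: part~(a) via the local attracting/parabolic dynamics of $g$ near $p$, and part~(b) via a Borel--Cantelli argument using the distortion bound $|(g^k)'|\asymp a^k$ on the tiles $J_n$ (coming from the bi-Lipschitz conjugacy $\Psi$) together with the summability in Lemma~\ref{lem:main}(b). The only cosmetic differences are that the paper sets $I=\bigcup_n J_n$ directly (two-sided in the hyperbolic case, via a second base point $w_0'$ on the other side of $p$) and justifies that $g$ preserves null sets near $p$ by its local bi-Lipschitz property rather than by invoking Theorem~\ref{thm:mu}.
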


\begin{rem} Using Remark~\ref{rem:cap} and repeating the arguments from \cite{ripponstallard-escaping} one can show that in fact $f^n(\varphi(z)) \to \infty$ for all points $z \in I$ except of a set of logarithmic capacity zero.
\end{rem}

\begin{proof}[Proof of Lemma~\rm\ref{lem:local}]

We use the notation from Lemma~\ref{lem:main} and its proof. In the simply parabolic case, when the Denjoy--Wolff point $p$ is a parabolic fixed point with one attracting petal, let
\[
I = \bigcup_{n = 0}^\infty J_n.
\]
In the hyperbolic case, when $p$ is an attracting fixed point, we define
\[
I = \bigcup_{n = 0}^\infty J_n \cup J_n^{\prime} \cup\{p\},
\]
where $J_n$ and $J_n'$ are the arcs defined in Lemma~\ref{lem:main} for, respectively, two points $w_0$ and $w_0'$ situated in $\bdd$ on both sides of $p$.  

The first assertion of the lemma, in both cases, follows directly from the definition of $I$. Indeed, in the hyperbolic case $I$ contains on open arc in $\bdd$ containing $p$, while in the simply parabolic case it forms a one-sided neighbourhood of $p$ in $\bdd$, contained in the unique attracting petal of $p$.

Now we prove the second assertion. Recall that by Fatou's Theorem, the radial limit of $\varphi$ exists at Lebesgue-almost all points of $\bdd$. Moreover, $g$ is bi-Lipschitz in a neighbourhood of $p$ (and hence preserves zero measure sets). This implies that the radial limits of $\varphi \circ g^n$, $n \ge 0$ exist at almost all points of $\bdd$. Hence, to prove the second assertion of the lemma, it is sufficient to show that the Lebesgue measure of the set $Y$ of points in $I$, for which the radial limits of $\varphi \circ g^n$ exist and do not tend to $\infty$ for $n \to \infty$, is equal to zero. In the simply parabolic case, the set $Y$ can be written as
\[
\bigcup_{n = 0}^\infty \bigcup_{M \in \N} \bigcap_{m \in \N} \bigcup_{k \ge m} \{z \in J_n: |\varphi(g^k(z))| < M\} = \bigcup_{n = 0}^\infty \bigcup_{M \in \N} \bigcap_{m \in \N} \bigcup_{k \ge m} g^{-k}(B_{n+k}(w_0, M)).
\]
Similarly, in the hyperbolic case $Y$ is equal to
\[
\bigcup_{n = 0}^\infty \bigcup_{M \in \N} \bigcap_{m \in \N} \bigcup_{k \ge m} g^{-k}(B_{n+k}(w_0, M)) \cup g^{-k}(B_{n+k}(w_0', M)) \cup \{p\},
\]
where $w_0$ and $w_0'$ are situated in $\bdd$ on both sides of $p$.  
Hence, to prove that $Y$ has Lebesgue measure zero, it is enough to show that
\[
\lim_{m \to \infty} \lambda \left(\bigcup_{k \ge m} g^{-k}(B_{n+k}(w_0, M))\right) = 0
\]
for every $M, n$ (the case of $w_0'$ is analogous). But
\[
\lim_{m \to \infty} \lambda \left(\bigcup_{k \ge m} g^{-k}(B_{n+k}(w_0, M))\right) \leq
\lim_{m\to \infty} \sum_{k=m}^{\infty} \lambda (g^{-k}(B_{n+k}(w_0, M))),
\]
so it is sufficient to show 
\begin{equation}\label{eq:toshow}
\sum_{k = 0}^\infty \lambda(g^{-k}(B_{n+k}(w_0, M))) < \infty.
\end{equation}
To do it, observe that because of \eqref{eq:conj}, \eqref{eq:conj2} and the fact that $\Psi$ is bi-Lipschitz on $\bigcup_{n\ge 0}J_n$ in both (hyperbolic and simply parabolic) cases, we have
\[
c_1 < \frac{|(g^k)'(z)|}{a^k} < c_2
\]
for every $z \in J_n$ and some $c_1, c_2 >0$ independent of $n,k$, where $g'(p)=a \in (0, 1]$ (see the proof of Lemma~\ref{lem:main}). This implies
\[
\lambda(g^{-k}(B_{n+k}(w_0, M))) \le \frac{c_3}{a^{k}} \lambda(B_{n+k}(w_0, M)) \le c_3 \frac{\lambda(B_{n+k}(w_0, M))}{a^{n+k}}
\]
for some $c_3>0$. The latter inequality together with Lemma~\ref{lem:main} shows \eqref{eq:toshow}, which implies that $Y$ has Lebesgue measure $0$ and ends the proof of the lemma.

\end{proof}

\begin{proof}[Proof of Theorem~\rm A]
By Lemma~\ref{lem:local}, $g^n \to p$ on $I$, where $I \subset \bdd$ has positive Lebesgue measure, so the map $g$ on $\bdd$ is not recurrent with respect to the Lebesgue measure. Therefore, by Theorem~\ref{thm:ifthen}, $g^n \to p$ Lebesgue-almost everywhere on $\bdd$. Thus, using again  Lemma~\ref{lem:local}, we obtain that for Lebesgue-almost every point $z \in \bdd$ there exists $k \ge 0$ such that $g^k(z) \in I$, and  $f^n(\varphi(g^k(z))) \to \infty$ as $n \to \infty$. Since $f\circ\varphi = \varphi\circ g$, we conclude that  
$f^n(\varphi(z)) \to \infty$ as $n \to \infty$ for Lebesgue-almost every point $z \in \bdd$, which is equivalent to say that almost every point in $\bd U$ with respect to harmonic measure escapes to infinity under iteration of $f$. 

\end{proof}

%%%%%%%%%%%%%%%%%%%%%%%%%%%%%%%%%%%%%%%%
%%%%%%%%%%%%%%%%%%%%%%%%%%%%%%%%%%%%%%%%
\section{Proof of Theorem~C} \label{sec:proofB}

In this section, devoted to the proof of Theorem~C, we assume that $U$ is a simply connected invariant Baker domain of a meromorphic map $f:\C\to \chat$, such that there exist $z \in U$, $r > 1$ and $c > 0$ such that
\[
\varrho_U(f^{n+1}(z), f^n(z)) \le \frac 1 n + \frac c {n^r}
\]
for every $n \ge 1$ (it is obvious that we can assume $r < 2$). The assumption
implies in particular
\[
\varrho_U(f^{n+1}(z), f^n(z)) \to 0
\]
as $n \to \infty$, so by Theorem~\ref{thm:doubly}, the map $f|_U$ is of doubly parabolic type.
As previously, we consider the associated inner function 
\[
g = \varphi^{-1}\circ f \circ \varphi,
\]
where $\varphi: \D \to U$ is a Riemann map. We will show that the series $\sum_{n=1}^\infty (1-|g^n(w)|)$ is divergent for $w \in \D$ and then apply Theorem~\ref{thm:ifthen}. To that end, we shall use Gauss' Series Convergence Test which ensures that if $a_n$ is a sequence of positive numbers such that 
\[
\frac{a_{n}}{a_{n+1} } \leq  1 + \frac{1}{n} + \frac{B_n}{n^r},
\]
for some $r>1$ and a bounded sequence $B_n$, then the series $\sum_{n=0}^\infty a_n$ is divergent. 

The formula \eqref{eq:hyp} implies
\begin{equation}\label{eq:1}
\frac 1 n + \frac c {n^r}  \ge \varrho_U(f^{n+1}(z), f^n(z)) = \varrho_\D(g^{n+1}(w), g^n(w))
\geq  \frac{2|g^{n+1}(w) - g^n(w)|}{|1 - g^n(w)\overline{g^{n+1}(w)}|}
\end{equation}
for $w \in \D$ and $z = \varphi(w)$. 
Since for any $u,v\in \D$, we have 
\[
|1-u \bar{v}| \leq 1-|v|^2 + \left||v|^2 - u \bar{v}\right| = 1- |v|^2 + |v||v-u| < 2 (1- |v|) + |v-u|,
\]
it follows that if we assume $|g^{n+1}(w)| > |g^n(w)|$, then
\begin{equation}\label{eq:2}
\begin{split}
\frac{2|g^{n+1}(w) - g^n(w)|}{|1 - g^n(w)\overline{g^{n+1}(w)}|} 
&\geq \frac{2|g^{n+1}(w) - g^n(w)|}{2(1 - |g^{n+1}(w)|) + |g^{n+1}(w)-g^n(w)|}\\
&= \frac{1}{(1 - |g^{n+1}(w))|/|g^{n+1}(w) - g^n(w)| + 1/2}\\
&\geq \frac{1}{(1 - |g^{n+1}(w))|/(|g^{n+1}(w)| - |g^n(w)|) + 1/2}\\
&= \frac{1}{a_{n+1}/(a_n - a_{n+1}) + 1/2}\\
&= \frac{1}{1/(a_n/a_{n+1} - 1) + 1/2},
\end{split}
\end{equation}
where
\[
a_n = 1 - |g^n(w)|.
\]
Note that $a_n > a_{n+1}$ by assumption, so $a_n/a_{n+1} - 1 > 0$. 

Using \eqref{eq:1}, \eqref{eq:2} and the fact $1 < r < 2$ we obtain
\begin{equation}\label{eq:a_n}
\frac{a_n}{a_{n+1}} \leq 1 + \frac{1}{1/(1/n + c/n^r) - 1/2} = 1 + \frac 1 n + \frac{cn + n^{r-1}/2 + c/2}{n(n^r-n^{r-1}/2 - c/2)} \le 1 + \frac 1 n + \frac{2c}{n^r}
\end{equation}
for large $n$ whenever $a_n > a_{n+1}$. Since \eqref{eq:a_n} holds trivially when $a_n \le a_{n+1}$, we conclude that \eqref{eq:a_n} is true for every sufficiently large $n$. 

Now by the Gauss series convergence test, \eqref{eq:a_n} implies that the series $\sum_{n=1}^\infty a_n = \sum_{n=1}^\infty(1 - |g^n(w)|)$ is divergent, so by Theorem~\ref{thm:ifthen}, the map $g$ on $\bdd$ is conservative with respect to the Lebesgue measure $\lambda$ on $\bdd$ (see Definition~\ref{defn:erg} and Theorem~\ref{thm:halmos}). Moreover, due to Theorems~\ref{thm:exact} and~\ref{thm:mu}, it is exact (in particular, ergodic) with respect to $\lambda$ and preserves the measure $\mu_p$ defined in \eqref{eq:mu}. By \eqref{eq:zero}, this implies that $g$ is non-singular with respect to $\lambda$ (see Definition~\ref{defn:erg}). 

Since $g$ is non-singular, conservative and ergodic with respect to the Lebesgue measure on $\bdd$, by Theorem~\ref{thm:conserv}, for every set $E \subset \bdd$ of positive Lebesgue measure, the forward trajectory under $g$ of Lebesgue-almost every point in $\bdd$ visits $E$ infinitely many times. Hence, for every set $B \subset \bd U$ of positive harmonic measure $\omega$, the forward trajectory under $f$ of $\omega$-almost every point in $\bd U$ visits $B$ infinitely many times. As the harmonic measure is positive on open sets in $\bd U$ (see e.g.~\cite{garnettmarshall}) and $\bd U$ is separable, for  
$\omega$-almost every point in $\bd U$ its forward trajectory under $f$ is dense in $\bdd$, which ends the proof of Theorem~C.

\section{Proof of Theorem~B and Proposition D} \label{sec:proofC}

Theorem~B follows immediately from Theorem~C. Indeed, it is enough to notice that if the associated inner function $g$ has a non-singular Denjoy--Wolff point $p \in \bdd$, then we can use the assertion~(c) of Theorem~\ref{thm:traj} to conclude that for $z \in U$ we have
\[
\varrho_U(f^{n+1}(z), f^n(z)) = \varrho_\D(g^{n+1}(w), g^n(w)) = \frac 1 {2n} + O\left(\frac{1}{n^{3/2}}\right), 
\]
so the assumption of Theorem~C is satisfied, which completes the proof of Theorem~B. 

\begin{rem}
An alternative proof of Theorem~B in the case when $f$ has finite degree on $U$ can be done by the use of the following result.

\begin{thm}[{\cite[Theorem \rm 6.1]{doering-mane}}]\label{thm:series}
Let $V$ be an invariant basin of a parabolic point $p \in \bd V$ of a rational map $R$ and let $F$ be a lift of $R$ by a universal covering $\pi: \D \to V$, i.e. $\pi \circ F = R \circ \pi$. Then for every $z \in \D$ and $\alpha > 1/2$,
\[
1 - |F^n(z)| \ge \frac{1}{n^\alpha}
\] 
for sufficiently large $n$.
\end{thm}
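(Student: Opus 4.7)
The plan is to reduce Theorem~\ref{thm:series} to Theorem~\ref{thm:traj}(c) by first showing that the lift $F$ is an inner function with a non-singular Denjoy--Wolff point of doubly parabolic type, and then converting the resulting sharp asymptotic on hyperbolic distances between successive iterates into the desired lower bound on $1-|F^n(z)|$.

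The first step is to verify that $F\colon\D\to\D$ is an inner function with non-singular Denjoy--Wolff point $p_*\in\bdd$, and to identify its local type at $p_*$. By the Leau--Fatou flower theorem, there exist Fatou coordinates $\Phi\colon P\to\HH$ on an attracting petal $P\subset V$ of $p$ conjugating $R|_P$ to $w\mapsto w+1$, and every orbit of $R$ in $V$ eventually enters $P$. Pulling $P$ back by the universal covering $\pi$ yields a disjoint union of copies of $P$ in $\D$, on which $F$ acts compatibly with the translation in Fatou coordinates. The iterates $F^n(z)$ therefore converge to a single boundary point $p_*\in\bdd$, which must be the Denjoy--Wolff point of $F$. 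A local analysis of $\pi$ near $p$, using that $R$ is holomorphic at $p$ and that the attracting petal lifts biholomorphically to a simply connected region of $\D$ whose closure meets $\bdd$ only at $p_*$, then shows that $F$ extends holomorphically across $p_*$ with expansion
\[
F(w) = p_* + (w-p_*) + \beta(w-p_*)^3 + O((w-p_*)^4),
\]
placing it in the doubly parabolic case of Theorem~\ref{thm:cowen}. That $F$ is genuinely an inner function follows from the properness of $R|_V$ as a self-map of the hyperbolic Riemann surface $V$.

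Granted this, Theorem~\ref{thm:traj}(c) applied to $F$ yields
\[
\varrho_\D(F^{n+1}(z), F^n(z)) = \frac{1}{2n} + O\!\left(\frac{\log n}{n^2}\right) \quad \text{as } n\to\infty.
\]
I would then combine the triangle inequality with the identity $\varrho_\D(0,w) = \log\tfrac{1+|w|}{1-|w|}$ to estimate
\[
\log\frac{1+|F^n(z)|}{1-|F^n(z)|} \;\le\; \varrho_\D(0,z) + \sum_{k=1}^{n-1} \varrho_\D(F^{k+1}(z), F^k(z)) \;=\; \tfrac{1}{2}\log n + O(1),
\]
so that $1-|F^n(z)| \ge c/\sqrt{n}$ for some $c>0$ and all sufficiently large $n$. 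Since $c/\sqrt{n} \ge 1/n^\alpha$ for every fixed $\alpha>1/2$ once $n$ is large enough, the claimed bound follows.

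The main obstacle will be the first step: establishing that $F$ is an inner function with a non-singular doubly parabolic Denjoy--Wolff point. In the simply connected case this is essentially immediate, since $\pi$ is a biholomorphism and $F$ is conformally conjugate to $R|_V$, with the parabolic structure at $p$ transported directly to $p_*$. The multiply connected case requires a more delicate local analysis of the universal covering near $p\in\bd V$; the key point is that the attracting petal of $p$ lifts to a region of $\D$ whose closure touches $\bdd$ only at $p_*$, so the cubic parabolic expansion of $F$ at $p_*$ can be read off from the Fatou coordinate of $R$ on that petal.
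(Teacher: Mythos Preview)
The paper does not prove Theorem~\ref{thm:series}; it is quoted from \cite{doering-mane} and invoked only in a remark, applied there with $R=g$ and $V=\D$. There is thus no argument in the paper to compare against, so what follows assesses your approach on its own.

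Your reduction to Theorem~\ref{thm:traj}(c) works when $V$ is simply connected: then $\pi$ is a Riemann map, $F=\pi^{-1}\circ R\circ\pi$ is a proper holomorphic self-map of $\D$ of the same finite degree as $R|_V$, hence a finite Blaschke product, so the Denjoy--Wolff point is automatically non-singular; Remark~\ref{rem:parabolic} gives the doubly parabolic type, Theorem~\ref{thm:traj}(c) supplies $\varrho_\D(F^{n+1}(z),F^n(z))=\tfrac{1}{2n}+O(n^{-2}\log n)$, and your telescoping bound on $\varrho_\D(0,F^n(z))$ correctly yields $1-|F^n(z)|\ge c/\sqrt{n}\ge n^{-\alpha}$ for large $n$. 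This already covers the paper's only use of the result.

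The genuine gap is the multiply connected case, which does occur for parabolic basins of rational maps. There $\pi$ has infinite degree and $F$ is no longer a Blaschke product; nothing in your sketch establishes that the Denjoy--Wolff point $p_*$ is non-singular (nor, incidentally, that $F$ is an inner function). The Fatou-coordinate conjugacy on the lifted petal $\widetilde P\subset\D$ describes $F$ only on that one-sided region, whereas holomorphic extension of $F$ across $p_*$ is a statement about a full neighbourhood of $p_*$ in $\C$ and would require controlling $\pi$ and the deck group near \emph{all} approaches to $p_*$, not just along $\widetilde P$. Without non-singularity Theorem~\ref{thm:traj} is unavailable and the argument collapses. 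A route that sidesteps this obstruction is to estimate $\varrho_\D(F^{n+1}(z),F^n(z))$ directly from the hyperbolic geometry of $V$ near $p$ (using Fatou coordinates and a comparison of $\varrho_V$ with the hyperbolic metric of the petal), rather than passing through the inner-function classification; this is closer to what \cite{doering-mane} actually do.
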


Applying Theorem~\ref{thm:series} to $R = g$, $V = \D$, one can check directly that the series $\sum_{n=1}^\infty (1 - |g^n(w)|)$ is divergent for $w \in \D$ and then proceed as in the final part of the proof of Theorem~C.
\end{rem}

We end this section by proving Proposition~D.

\begin{proof}[Proof of Proposition \rm D]
By the conformal change of coordinates $z\mapsto z/a$, we can assume $a = 1$. Then the assumption of the proposition has the form
\begin{equation}\label{eq:r}
|f(z) - z - 1| < \frac{c_0}{(\Re(z))^r} \qquad \text{for} \quad z \in H,
\end{equation}
where $H = \{w \in \C: \Re(w) > c_1\}$. Enlarging $c_1$, we can assume
$c_1 > 1/2$ and $\sum_{k = 1}^\infty c_0/(c_1 + k - 3/2)^r < 1/2$, which implies
\begin{equation}\label{eq:half}
\sum_{k = 1}^\infty \frac{c_0}{(\Re(z) + k - 3/2)^r} < \frac 1 2 \qquad
\text{for} \quad z \in H.
\end{equation}
Now we prove inductively that for every $n \ge 1$,
\begin{equation}\label{eq:ind}
\Re(f^n(z)) > \Re(z) + n - \sum_{k = 1}^n \frac{c_0}{(\Re(z) + k - 3/2)^r}
\qquad \text{for} \quad z \in H.
\end{equation}
To do it, note that \eqref{eq:ind} for $n = 1$ follows immediately from
\eqref{eq:r}. For $n > 1$ and $z \in H$ we obtain, using consecutively
\eqref{eq:r}, the inductive assumption and \eqref{eq:half},
\begin{multline*}
\Re(f^n(z)) > \Re(f^{n-1}(z)) + 1 - \frac{c_0}{(\Re(f^{n-1}(z)))^r} \\>
\Re(z) + n - \sum_{k = 1}^{n-1} \frac{c_0}{(\Re(z) + k - 3/2)^r} -
\frac{c_0}{\left(\Re(z) + n - 1 - \sum_{k = 1}^{n-1} c_0/(\Re(z) + k -
3/2)^r\right)^r}\\
> \Re(z) + n - \sum_{k = 1}^{n-1} \frac{c_0}{(\Re(z) + k - 3/2)^r} -
\frac{c_0}{(\Re(z) + n - 3/2)^r} \\= \Re(z) + n - \sum_{k = 1}^n
\frac{c_0}{(\Re(z) + k - 3/2)^r}.
\end{multline*}
which gives \eqref{eq:ind}. By \eqref{eq:ind} and \eqref{eq:half}, we have
\begin{equation}\label{eq:ind2}
\Re(f^n(z)) > c_1 + n - \frac 1 2 > n \quad \text{for} \quad z \in H,
\end{equation}
in particular $\Re(f^n(z)) \to +\infty$ for $z \in H$, so $H$ is contained
in an invariant Baker domain $U$ of $f$.

Suppose now that $U$ is simply connected. Then using \eqref{eq:hypdist}, \eqref{eq:r} and
\eqref{eq:ind2} we obtain, for $z \in H$ and large $n$,
\begin{multline*}
\varrho_U(f^{n+1}(z), f^n(z)) \le \varrho_H(f^{n+1}(z), f^n(z)) \le
\varrho_H(f^{n+1}(z), f^n(z) + 1) + \varrho_H(f^n(z) + 1, f^n(z))\\
\le \frac{2|f^{n+1}(z) - f^n(z) - 1|}{\min(\dist(f^{n+1}(z), \bd H),
\dist(f^n(z) + 1, \bd H))} + \ln \left(1 + \frac{1}{\Re(f^n(z)) -
c_1}\right) +  \\
\le \frac{c_0}{n^r(n+1 - c_1)} +  \frac{1}{n - c_1}  = \frac 1 n +
O\left(\frac{1}{n^2}\right)
\end{multline*}
as $n \to \infty$, so the assumptions of Theorem~{\rm C} are satisfied.
\end{proof}

\bibliography{escaping}

\end{document}